%
%
%
%
%

\documentclass[11pt,reqno]{amsart}
\usepackage{amssymb,latexsym,graphicx,amscd,upgreek}
\usepackage{lscape}
\usepackage[all]{xy}
\usepackage{color}
\usepackage{epic,eepic}
\usepackage{xspace}
\usepackage{mathrsfs}
\usepackage{setspace}
\usepackage{cases}
\usepackage{bbm}
\textheight 230mm
\textwidth 150mm
\hoffset -16mm
\voffset -16mm

\newtheorem{Thm}{Theorem}[section]
\newtheorem{Lem}[Thm]{Lemma}
\newtheorem{Cor}[Thm]{Corollary}
\newtheorem{Prop}[Thm]{Proposition}
\newtheorem{Conj}[Thm]{Conjecture}

\newtheorem{Problem}[Thm]{Problem}

\theoremstyle{definition}

\newcommand{\N}{\mathbb{N}}

\newcommand{\df}{\colon}
\newcommand{\id}{\operatorname{id}}

\newcommand{\cI}{{\mathcal I}}

\newcommand{\cO}{{\mathcal O}}

\newcommand{\cS}{{\mathcal S}}
\newcommand{\cT}{{\mathcal T}}
\newcommand{\cU}{{\mathcal U}}

\newcommand{\cX}{{\mathcal X}}
\newcommand{\cY}{{\mathcal Y}}

\newcommand{\bd}{\mathbf{d}}

\newcommand{\bp}{{\mathbf p}}

\newcommand{\vep}{\varepsilon}

\newcommand{\rk}{\operatorname{rank}}

\newcommand{\md}{\operatorname{mod}}

\newcommand{\rep}{\operatorname{rep}}

\newcommand{\gldim}{\operatorname{gl.dim}}
\newcommand{\pdim}{\operatorname{proj.dim}}
\newcommand{\idim}{\operatorname{inj.dim}}

\newcommand{\dimv}{\underline{\dim}}

\newcommand{\soc}{\operatorname{soc}}
\newcommand{\rad}{\operatorname{rad}}
\newcommand{\tp}{\operatorname{top}}

\newcommand{\Hom}{\operatorname{Hom}}
\newcommand{\Ext}{\operatorname{Ext}}
\newcommand{\End}{\operatorname{End}}

\newcommand{\supp}{\operatorname{supp}}
\newcommand{\length}{\operatorname{length}}
\newcommand{\ov}{\overline}

\newcommand{\bsm}{\begin{smallmatrix}}
\newcommand{\esm}{\end{smallmatrix}}

\newcommand{\bbsm}{\left[\begin{smallmatrix}}
\newcommand{\besm}{\end{smallmatrix}\right]}

\newcommand{\bbm}{\begin{matrix}}
\newcommand{\ebm}{\end{matrix}}

\newcommand{\GL}{\operatorname{GL}}


\begin{document}

\date{18.09.2017}

\title[Algebras with irreducible module varieties I]{Algebras with irreducible module varieties I}

\author{Grzegorz Bobi\'nski}
\address{Grzegorz Bobi\'nski\newline
Faculty of Mathematics and Computer Science\newline
Nicolaus Copernicus University\newline
ul. Chopina 12/18\newline
87-100 Toru\'n\newline
Poland}
\email{gregbob@mat.umk.pl}

\author{Jan Schr\"oer}
\address{Jan Schr\"oer\newline
Mathematisches Institut\newline
Universit\"at Bonn\newline
Endenicher Allee 60\newline
53115 Bonn\newline
Germany}
\email{schroer@math.uni-bonn.de}



\begin{abstract}
We call a finite-dimensional $K$-algebra $A$ \emph{geometrically irreducible}
if for all $d \ge 0$ all
connected components of the affine scheme of $d$-dimensional $A$-modules are irreducible.
We show that the geometrically irreducible algebras without loops
(this includes all algebras of finite global dimension) are
the hereditary algebras.
We also show that truncated polynomial rings are the only
geometrically irreducible local algebras.
Finally, we formulate a conjectural classification of all geometrically
irreducible algebras.
\end{abstract}

\maketitle

\setcounter{tocdepth}{1}
\numberwithin{equation}{section}
\tableofcontents

\parskip2mm


\section{Introduction and main result}


\subsection{Introduction}
Let $A$ be a finite-dimensional $K$-algebra with $K$ an algebraically
closed field, and for $d \ge 0$ let
$\md(A,d)$ be the affine scheme of $d$-dimensional $A$-modules.

The connected components of $\md(A,d)$ correspond to the different
dimension vectors (which count the Jordan--H\"older multiplicities)
of $d$-dimensional modules.

In general, the connected components of $\md(A,d)$ are reducible and
singular.
The following result due to Bongartz \cite{Bo} determines when $\md(A,d)$
is smooth (i.e. non-singular) for all $d$, see also \cite{AGV} for
related aspects.
Recall that $A$ is \emph{hereditary} if $\gldim(A) \le 1$.

\begin{Thm}[Bongartz \cite{Bo}]\label{thm:intro1}
The following are equivalent:
\begin{itemize}

\item[(i)]
$\md(A,d)$ is smooth for all $d$;

\item[(ii)]
$A$ is hereditary.

\end{itemize}
\end{Thm}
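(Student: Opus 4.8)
The plan is to reformulate smoothness of $\md(A,d)$ in terms of the representation schemes $\rep(A,\mathbf{d})$ and then to handle the two implications separately. For a dimension vector $\mathbf{d}$ with $|\mathbf{d}| = d$, the locus $\md(A,d)_{\mathbf{d}}$ of $d$-dimensional modules whose dimension vector is $\mathbf{d}$ is open and closed (the entries of the dimension vector are the traces of the idempotents $\rho(e_i)$, hence locally constant), these loci cover $\md(A,d)$, and $\md(A,d)_{\mathbf{d}}$ is isomorphic to the associated fibre bundle $\GL_d \times^{\GL_{\mathbf{d}}} \rep(A,\mathbf{d})$, where $\GL_{\mathbf{d}} = \prod_i \GL_{d_i}$. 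Since $\GL_{\mathbf{d}}$ is a special group, this bundle is Zariski-locally trivial over the smooth homogeneous space $\GL_d/\GL_{\mathbf{d}}$, so it is smooth exactly when its fibre $\rep(A,\mathbf{d})$ is smooth. Hence (i) is equivalent to smoothness of $\rep(A,\mathbf{d})$ for all $\mathbf{d}$. As both this property and heredity are invariant under Morita equivalence, one may assume $A$ is basic, so $A = KQ/I$ with $Q$ a finite quiver on $n$ vertices and $I \subseteq \rad^2(KQ)$ admissible; then $A$ is hereditary precisely when $I = 0$.

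The implication (ii)$\Rightarrow$(i) is then immediate: if $I = 0$, each $\rep(KQ,\mathbf{d}) = \prod_{\alpha\colon i \to j} \Hom(K^{d_i}, K^{d_j})$ is an affine space, hence smooth.

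For (i)$\Rightarrow$(ii) the plan is to argue by contraposition. Assume $I \neq 0$ and pick $0 \neq \rho \in I$; after replacing $\rho$ by $e_j\rho e_i$ for a suitable pair $i,j$ one may assume $\rho = \sum_p c_p\, p$ is a $K$-linear combination, with some $c_p \neq 0$, of pairwise distinct paths $p\colon i \to j$ of length $\geq 2$. For a dimension vector $\mathbf{d}_N := (N,\dots,N)$, a point of $\rep(KQ,\mathbf{d}_N)$ is simply an assignment $\alpha \mapsto M_\alpha \in M_N(K)$ to the arrows, with no constraints, and $\rho$ induces the polynomial map $(M_\alpha)_\alpha \mapsto \sum_p c_p M_p$, $M_p$ being the corresponding product of matrices. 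Because distinct paths are distinct words in the arrows, $\rho$ is a nonzero element of the free associative algebra, so for $N$ sufficiently large this map is not identically zero; fix such an $N$. Now $x_0 := (0,\dots,0) \in \rep(KQ,\mathbf{d}_N)$ is the semisimple module $(A/\rad A)^N$. Every equation cutting out $\rep(A,\mathbf{d}_N)$ inside $\rep(KQ,\mathbf{d}_N)$ comes from a relation in $\rad^2(KQ)$, hence is a polynomial in the entries of the $M_\alpha$ all of whose monomials have degree $\geq 2$; therefore $x_0 \in \rep(A,\mathbf{d}_N)$ and its tangent space there is all of the ambient affine space $\rep(KQ,\mathbf{d}_N)$. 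If $\rep(A,\mathbf{d}_N)$ were smooth at $x_0$, its local ring there would be regular of dimension $\dim_K \rep(KQ,\mathbf{d}_N)$; being a quotient of the regular local ring of the ambient affine space at $x_0$, which has the same dimension, it must coincide with it, so the ideal of $\rep(A,\mathbf{d}_N)$ vanishes identically and every tuple $(M_\alpha)_\alpha$ satisfies $\sum_p c_p M_p = 0$ — contradicting the choice of $N$. Hence $\rep(A,\mathbf{d}_N)$, and with it $\md(A,nN)$, is singular.

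The one genuinely delicate step is the passage to $\mathbf{d}_N$ with $N$ large. Over the dimension vector $(1,\dots,1)$ the equation attached to a commutativity-type relation such as $\alpha\gamma\beta - \beta\gamma\alpha$ becomes identically zero, so that vector fails to detect the relation and the argument collapses; using $N \times N$ matrices with $N$ large exploits non-commutativity to keep the paths occurring in $\rho$ linearly independent as functions on $\rep(KQ,\mathbf{d}_N)$. The remaining ingredients — the fibre-bundle description of $\md(A,d)$, the reduction to the basic case, and the fact that a surjection between regular local rings of equal dimension is an isomorphism — are standard.
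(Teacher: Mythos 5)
The paper does not prove this statement: it is quoted from Bongartz's article \cite{Bo} and used as a black box, so there is no in-paper argument to compare against. Judged on its own, your proof is a correct reconstruction of the standard argument, and it is in the same spirit as Bongartz's original one (detecting the singularity at the semisimple point, where the relations impose no linear conditions). The reduction of smoothness of $\md(A,d)$ to smoothness of the $\rep(A,\mathbf{d})$ via the associated bundle $\GL_d\times^{\GL_\mathbf{d}}\rep(A,\mathbf{d})$, the local-ring argument showing that a point with full ambient tangent space is either smooth with vanishing defining ideal or singular, and the choice of the dimension vector $(N,\dots,N)$ are all sound. Two small points deserve tightening. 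First, in characteristic $p$ the trace of $\rho(e_i)$ only records the rank modulo $p$; the clean statement is that $\operatorname{rank}\rho(e_i)$ is lower semicontinuous and the ranks sum to the constant $d$, so each is locally constant and the loci $\md(A,d)_{\mathbf{d}}$ are still open and closed. Second, the assertion that a nonzero element of the free associative algebra does not vanish identically on tuples of $N\times N$ matrices for large $N$ is exactly the statement that $M_N(K)$ satisfies no polynomial identity of degree less than $2N$; this is true (multilinearize and use the staircase argument, or invoke Amitsur--Levitzki), but it is the one genuinely nontrivial external input of your proof and should be cited or proved, since the whole contrapositive collapses without it. With those two points made explicit, the argument is complete.
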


We call $A$ \emph{geometrically irreducible} if for each $d$ all connected components of $\md(A,d)$ are irreducible.
Note that Theorem~\ref{thm:intro1} implies that all hereditary algebras $A$ are geometrically irreducible.

In this article, we deal with the following problem.

\begin{Problem}\label{problem:intro}
Find a necessary and sufficient condition for $A$ to be geometrically
irreducible.
\end{Problem}

We did not find a complete answer to Problem~\ref{problem:intro}.
But we have a list of partial results and also a general conjecture.

Thanks to the work of Bongartz \cite{Bo} on the behaviour of $\md(A,d)$
under Morita equivalence, we can assume without loss of generality that
$A = KQ/\cI$, where $KQ$ is the path algebra of a quiver $Q$ and
$\cI$ is an admissible ideal in $KQ$.
For a dimension vector $\bd \in \N^n$, where $n$ is the
number of vertices of $Q$, let $\rep(A,\bd)$ be the affine scheme of
representations of $Q$ which are annihilated by $\cI$.
Then $A$ is geometrically irreducible if and only if $\rep(A,\bd)$ is
irreducible for all $\bd$.

\subsection{Main result}\label{sec:intro1.2}
Recall that an arrow $\vep$ in $Q$ is a \emph{loop} if
$\vep$ starts and ends in the same vertex.

\begin{Thm}\label{thm:intro2a}
Let $A = KQ/\cI$.
Assume that $Q$ does not contain any loop.
Then the following are equivalent:
\begin{itemize}

\item[(i)]
$A$ is geometrically irreducible;

\item[(ii)]
$A$ is hereditary.

\end{itemize}
\end{Thm}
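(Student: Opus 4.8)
The plan is to establish the two implications separately. The direction (ii) $\Rightarrow$ (i) is immediate: Theorem~\ref{thm:intro1} says that hereditary algebras have smooth module schemes, and a smooth scheme is a disjoint union of irreducible (in fact smooth) components, so every connected component of $\md(A,d)$ is irreducible. Thus the content of the theorem is the implication (i) $\Rightarrow$ (ii), equivalently its contrapositive: if $A = KQ/\cI$ has no loops but is \emph{not} hereditary, then some $\rep(A,\bd)$ is reducible.

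\medskip

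For the hard direction I would argue as follows. Since $A$ is not hereditary, $\gldim(A) \ge 2$, and because $Q$ has no loops this forces a \emph{minimal relation} of length $\ge 2$ in $\cI$: concretely, there exist vertices $i \ne j$ (distinctness uses the no-loop hypothesis and minimality, roughly as in Bongartz's analysis) and a relation $\rho = \sum_k c_k p_k \in \cI$, where each $p_k$ is a path from $i$ to $j$ of length $\ge 2$, with $\rho$ not in the ideal generated by shorter relations. The strategy is then to cook up a dimension vector $\bd$ and exhibit two distinct irreducible closed subsets of $\rep(A,\bd)$, both of the maximal dimension, so that $\rep(A,\bd)$ cannot be irreducible. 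The natural choice is to take $\bd$ supported on the subquiver of $Q$ through which the paths $p_k$ run, with small multiplicities, and to compare the closure of the orbit (or family) of "generic" representations — where the arrows along one path $p_{k_0}$ act as generically as the relation permits — against a degenerate locus where one arrow in the support is forced to be zero. Since $\rho$ involves at least two arrows entering the constraint, setting different arrows to zero produces genuinely different components. One clean way to make this precise is to localize to a string/tree-shaped subquiver carrying the relation and to directly compute $\dim \rep(A,\bd)$ via the standard formula $\sum_{\alpha} \bd_{s(\alpha)}\bd_{t(\alpha)}$ minus the (independent) relation conditions, then show two distinct irreducible pieces each attain this dimension.

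\medskip

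An alternative and possibly cleaner route — the one I would actually pursue first — is a local-to-global reduction. One shows that if $A$ is geometrically irreducible then so is $eAe$ for any idempotent $e$ (restricting to a subquiver and the induced relations should preserve irreducibility of module schemes, by a standard projection/section argument on representation spaces), and also that convex subcategories or quotients by ideals generated by paths behave well. Combining such reductions, a non-hereditary loopless $A$ can be cut down to one of a short list of "minimal" non-hereditary loopless algebras — for instance $KQ/\cI$ with $Q$ a quiver on two or three vertices and $\cI$ generated by a single length-two monomial or length-two commutativity-type relation (the analogues of $K[x,y]/(xy)$-flavoured situations but without loops, e.g.\ $1 \to 2 \to 3$ with the composite zero). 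For each such minimal algebra one checks by hand that a suitable small $\rep(A,\bd)$ is reducible, typically by writing it as the union of the locus where the first arrow vanishes and the locus where the second arrow vanishes, two irreducible components meeting properly.

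\medskip

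I expect the main obstacle to be twofold. First, pinning down exactly which minimal non-hereditary loopless algebras must appear — i.e.\ proving the reduction step that every such $A$ "contains" one of them in a way that transports reducibility back up — requires care, since quotient and subalgebra operations do not obviously preserve irreducibility of $\rep(A,\bd)$ in general; one needs the specific structural features (no loops, $\cI$ admissible) to control the fibres. Second, even for a fixed minimal example, showing that the two candidate components are \emph{distinct} (neither contained in the other) and that \emph{together} they cover $\rep(A,\bd)$ demands an honest dimension count and an argument that a generic representation really does have one of the two relevant arrows zero — here the minimality of the relation (it is not a consequence of shorter relations) is what guarantees the relation genuinely constrains the generic point and thereby breaks irreducibility.
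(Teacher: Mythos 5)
Your direction (ii) $\Rightarrow$ (i) and the overall shape you propose for the hard direction (localize to the full subquiver carrying a minimal relation, then exhibit two irreducible subsets with the same dimension vector that cannot lie in one component) do match the paper. But both of your proposed executions have a genuine gap at the decisive step. The concrete suggestion of your first route --- take $\bd$ with small multiplicities and compare loci where different arrows vanish --- fails as soon as the minimal relation is not monomial or there are parallel arrows. For example, for the loopless linear quiver $Q^\circ(2,2,2)$ with the single relation $\alpha_{11}\alpha_{12}+\alpha_{21}\alpha_{22}$, the scheme $\rep(A,(1,1,1))$ is the quadric $\{x_{11}x_{12}+x_{21}x_{22}=0\}$ in $\A^4$, which is irreducible; no arrow-vanishing locus is a component there, and one must pass to larger dimension vectors where the two competing subsets are orbit closures of specific locally free modules rather than coordinate subspaces. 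The paper's Proposition~\ref{prop:nobigrelations} does exactly this: it compares the open orbit of the projective $P = Ae_n\oplus Ae_0^{k+l}$ with the orbit of $(Ae_{n-1})^{r_n}\oplus(Ae_0)^k\oplus D(e_nA)$, which has the same rank vector, and chooses $k$ large enough that the latter orbit has strictly bigger dimension; this in turn rests on rigid modules being locally free (Lemma~\ref{lem:rigidlocfree1}) and on rank formulas for projectives over the tensor algebra $T_S(B)$ (Propositions~\ref{prop isomorphism} and~\ref{prop ranks}). None of this machinery is visible in your sketch, and it is the real content of the theorem.

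Your second route has a reduction-step gap that you partly anticipate but underestimate. The only reduction the hypotheses give you for free is killing a set of vertices, $A \mapsto A/Ae_EA$, since $\rep(A,\bd)$ for $\bd$ supported off $E$ is literally $\rep(A/Ae_EA,\bd)$; deleting arrows or passing to other subalgebras or quotients does not preserve geometric irreducibility (a closed subscheme of an irreducible scheme can be reducible, and an irreducible closed subscheme says nothing about the ambient scheme). Consequently the ``minimal'' configurations are not a short list of two- or three-vertex monomial examples such as $1\to 2\to 3$ with zero composite: after restricting to the support of a shortest path occurring in a relation (which requires first ruling out oriented cycles via Lemma~\ref{lem:reduction3}), what remains is a loopless linear quiver $Q^\circ(m,t_1,\ldots,t_m)$ with arbitrarily many parallel arrows and an arbitrary relation of degree at least two (Corollary~\ref{cor:nobigrelations}), and disposing of these is precisely the dimension count above. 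So your proposal correctly locates where the difficulty sits but does not supply the argument that resolves it.
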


As a relatively straightforward consequence of Theorem~\ref{thm:intro2a}
combined with the No-Loop Theorem (cf. \cite{I} and \cite{L})
we get the following characterization of geometrically irreducible algebras
of finite global dimension.

\begin{Cor}\label{cor:intro2b}
Assume that $\gldim(A) < \infty$.
Then the following are equivalent:
\begin{itemize}

\item[(i)]
$A$ is geometrically irreducible;

\item[(ii)]
$A$ is hereditary.

\end{itemize}
\end{Cor}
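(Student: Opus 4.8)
The plan is to deduce Corollary~\ref{cor:intro2b} from Theorem~\ref{thm:intro2a} by reducing to the loop-free case. The only implication requiring work is (i)$\Rightarrow$(ii): if $A$ is hereditary then $\gldim(A)\le 1<\infty$ and geometric irreducibility follows from Theorem~\ref{thm:intro1} (or directly from Theorem~\ref{thm:intro2a}). So assume $\gldim(A)<\infty$ and $A$ is geometrically irreducible; we must show $A$ is hereditary.

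First I would pass to a basic algebra Morita equivalent to $A$, writing $A=KQ/\cI$ with $\cI$ admissible; by Bongartz's results quoted in the introduction, Morita equivalence preserves geometric irreducibility, and global dimension is a Morita invariant, so this reduction is harmless. The key point is then that, by the No-Loop Theorem (cf. \cite{I}, \cite{L}), an algebra of finite global dimension has no loops in its quiver: more precisely, if $\gldim(A)<\infty$ then $\Ext_A^1(S,S)=0$ for every simple module $S$, and $\Ext_A^1(S_i,S_i)$ is precisely the span of the loops at vertex $i$ in $Q$, so $Q$ has no loops. Hence the hypotheses of Theorem~\ref{thm:intro2a} are satisfied.

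With $Q$ loop-free, Theorem~\ref{thm:intro2a} applies directly: geometric irreducibility of $A=KQ/\cI$ is equivalent to $A$ being hereditary. This gives (ii), completing the cycle of implications. I would also remark that conversely a hereditary algebra automatically has finite global dimension, so the corollary is genuinely a specialization of Theorem~\ref{thm:intro2a} to a class of algebras on which the loop-free hypothesis is automatic, rather than an independent statement.

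The only real subtlety — and thus the one point I would state carefully rather than wave at — is the invocation of the No-Loop Theorem and the identification of $\Ext^1(S_i,S_i)$ with loops at $i$: this is where "finite global dimension" gets converted into the combinatorial hypothesis "no loops" needed to invoke Theorem~\ref{thm:intro2a}. Everything else (Morita reduction, the two easy implications) is routine bookkeeping, so I expect no genuine obstacle; the proof is essentially a two-line citation assembled in the right order.
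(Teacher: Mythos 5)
Your proposal is correct and follows exactly the paper's own argument: invoke the No-Loop Theorem to conclude $Q$ has no loops when $\gldim(A)<\infty$, then apply Theorem~\ref{thm:intro2a}. The extra detail you supply (the identification of $\Ext^1_A(S_i,S_i)$ with the span of loops at $i$, and the Morita reduction) is a harmless elaboration of the same two-line proof.
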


An easy example of geometrically irreducible and non-hereditary algebras are truncated polynomial rings $K[X]/(X^m)$ with $m \ge 2$.
(Note that these are isomorphic to the algebra $KQ/\cI$, where $Q$
has exactly one vertex and one loop $\vep$, and $\cI$ is generated by
$\vep^m$.)
The following proposition shows that these are the only examples of
geometrically irreducible local algebras.

\begin{Prop}\label{prop:intro2c}
Assume that $A = KQ/\cI$ is local, i.e. $Q$ has just one vertex.
Then the following are equivalent:
\begin{itemize}

\item[(i)]
$A$ is geometrically irreducible;

\item[(ii)]
$Q$ has at most one arrow;

\item[(iii)]
$A \cong K[X]/(X^m)$
for some $m \ge 1$.

\end{itemize}
\end{Prop}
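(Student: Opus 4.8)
\emph{Proof strategy.}

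The equivalence (ii)$\Leftrightarrow$(iii) is elementary: if $Q$ has no arrow then $A = K \cong K[X]/(X)$, while if $Q$ has a single loop $\vep$ then $KQ \cong K[\vep]$ is a principal ideal domain, so the admissible ideal is $(\vep^m)$ for some $m\ge 2$ and $A \cong K[X]/(X^m)$; the converse is clear. For (iii)$\Rightarrow$(i) I would note that a $d$-dimensional representation of $K[X]/(X^m)$ is just a $d\times d$ matrix $M$ with $M^m=0$, so the underlying variety of $\rep(A,d)$ is the variety of such nilpotent matrices, which is the closure of the conjugacy class of the nilpotent matrix of the largest Jordan type with all parts $\le m$, hence irreducible; as $Q$ has one vertex, $\rep(A,d)$ is connected, so $A$ is geometrically irreducible.

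The real content is (i)$\Rightarrow$(ii), which I would prove contrapositively: assuming $Q$ has $r\ge 2$ arrows, I would exhibit a dimension $d$ for which $\rep(A,d)$ is reducible, by producing two irreducible closed subsets $\cZ_1,\cZ_2\subseteq\rep(A,d)$ of the same maximal dimension with $\cZ_1\not\subseteq\cZ_2$ and $\cZ_2\not\subseteq\cZ_1$. The incomparability would be read off from the usual semicontinuity properties on $\rep(A,d)$: $M\mapsto\dim\rad(M)$, which equals the rank of the juxtaposition of the arrow operators, is lower semicontinuous; $M\mapsto\dim\soc(M)$, which equals $d$ minus the rank of the stacked arrow operators, is upper semicontinuous; and the locus where the arrow operators are linearly dependent is closed.

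The case where $A$ is \emph{not} self-injective is clean. I would take $d:=\dim_K A$ and $\cZ_2:=\overline{\GL_d\cdot A}$, the orbit closure of the regular module. Since $A$ is projective, $\Ext^1_A(A,A)=0$, so $\GL_d\cdot A$ is open in $\rep(A,d)$; were $\rep(A,d)$ irreducible, $A$ would therefore be its generic point, whence by upper semicontinuity $\dim\soc(M)\ge\dim\soc(A)$ for \emph{every} $M\in\rep(A,d)$. But $A$ not self-injective means $\soc(A)$ is not simple, i.e.\ $\dim\soc(A)\ge 2$, whereas the injective envelope of the simple $A$-module --- which is $K$-dual to the regular right $A$-module, hence again $\dim_K A$-dimensional, but with one-dimensional socle --- violates that bound. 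So $\rep(A,d)$ is reducible.

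This leaves the local self-injective algebras with $r\ge 2$ arrows, namely those of one-dimensional socle, such as $K[X,Y]/(X^2,Y^2)$, the exterior algebra on two generators, or $K[X,Y]/(X^3,Y^3)$. Here the socle comparison above is unavailable, since modules of small socle embed into a fixed power of $A$ and so are bounded in dimension; instead I would work in a small dimension $d$ and split the top-dimensional stratum of $\rep(A,d)$ into two incomparable irreducible families. For $K[X,Y]/(X^2,Y^2)$ and the exterior algebra already $d=3$ does it: every representation there has both arrows acting by matrices of rank $\le 1$, and inside the locus where both act with rank exactly $1$ one finds the two families ``the arrows share an image'' and ``the arrows share a kernel'', each irreducible of the same dimension, neither contained in the closure of the other since having a common image (resp.\ a common kernel) is a closed condition that fails generically on the other family. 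The main obstacle --- and the technical core of the argument --- is to carry this low-dimensional analysis through uniformly: to pin down, for an arbitrary local self-injective $A$ with $\ge 2$ arrows, the correct $d$, the two incomparable top-dimensional families, and the dimension count showing they have equal dimension.
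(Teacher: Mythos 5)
Your reduction (ii)$\Leftrightarrow$(iii), your proof of (iii)$\Rightarrow$(i) via the irreducibility of the variety of $m$-nilpotent matrices, and your treatment of the non-self-injective case are all correct; the last of these is essentially the first half of the paper's argument (the paper phrases it as: $P={_A}A$ and $I=D(A_A)$ are both rigid of the same dimension, hence have open orbits, so geometric irreducibility forces $P\cong I$, i.e.\ $A$ is self-injective).

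However, there is a genuine gap exactly where you flag it: the local self-injective case with $r\ge 2$ loops. What you offer there is an ad hoc analysis of two specific algebras at $d=3$ together with the admission that you do not know how to choose $d$, the two families, or the dimension count in general; that is the entire remaining content of (i)$\Rightarrow$(ii), so the proof is not complete. The paper closes this case uniformly and without any dimension count, by a socle/top comparison in dimension $d-1$ where $d=\dim_K A$. Since $A$ is now self-injective and local, $P:={_A}A$ is the injective envelope of the simple module $S$ and has simple socle. Hence in $\rep(A,d-1)$ the set $\cS=\{M\mid \soc(M)\cong S\}$ consists precisely of the modules isomorphic to $\rad(P)$ (a module with simple socle embeds into $P$, and the only codimension-one submodule of the local module $P$ is $\rad(P)$), and dually $\cT=\{M\mid \tp(M)\cong S\}$ consists precisely of the modules isomorphic to $P/\soc(P)$. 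Both sets are nonempty and open (by the semicontinuity of $\dim\soc$ and $\dim\tp$ that you already invoke), so if $\rep(A,d-1)$ were irreducible they would intersect and therefore coincide, giving $\rad(P)\cong P/\soc(P)$. But $\tp(P/\soc(P))\cong S$ is simple, whereas $\tp(\rad(P))\cong S^{r}$ with $r\ge 2$ because $\cI$ is admissible (generated by paths of length at least two), a contradiction. You should replace your case-by-case sketch with an argument of this kind; as it stands, the crucial implication is only proved for two examples.
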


Note that each finite-dimensional local $K$-algebra
is of the form $KQ/\cI$.

Back to the general case, assume that
$A = KQ/\cI$ is geometrically irreducible.
It is not hard to show that each oriented cycle in $Q$ is a power
of a loop.
In particular, there is at most one loop at each vertex.

Let $Q$ be the quiver
\[
\xymatrix{
0 \ar@(ul,dl)_{\vep_0}& 1 \ar@(ur,dr)^{\vep_1} \ar[l]_{\alpha}
}.
\]
For $n \ge 2$ let
$$
\rho_n := \sum_{p=0}^{n-1} \vep_0^{n-p-1}\alpha\vep_1^p.
$$
For $m,n \ge 2$
let $B(m,n) := KQ/\cI$, where the ideal $\cI$ is
generated by $\{ \vep_0^m, \vep_1^m, \rho_n \}$.

Up to some trivial glueing procedure explained in Section~\ref{sec:glue},
we conjecture that hereditary algebras, truncated polynomial rings, and the algebras $B(m,2)$ and $B(m,m)$ are the only geometrically irreducible algebras.
In \cite{BS1} we prove that (modulo glueing) geometrically irreducible algebras $A = KQ/\cI$, where $Q$ has at most two vertices,
belong to one of the four classes mentioned above, and in \cite{BS2} we show that the algebras $B(m,2)$ are indeed
geometrically irreducible.

\subsection{Finitely generated algebras}
Instead of studying finite-dimensional algebras $A$, we could
assume that $A$ is finitely generated, and ask when all connected
components of $\md(A,d)$ are irreducible.
The following example shows that this more general classification problem
includes some interesting classical examples.

Namely,
let $A = K[X,Y]$ be the polynomial ring in two commuting variables.
Then
$$
\md(A,d) \cong \{ (X,Y) \in M_d(K) \times M_d(K) \mid AB = BA \}
$$
is the famous \emph{commuting variety}.
It follows from \cite[Theorem~6]{MT} that
$\md(A,d)$ is irreducible for all $d$.
So $A$ is geometrically irreducible.
On the other hand, $A = K[X,Y,Z]$ is geometrically reducible,
see for example \cite[Theorem~3(b)]{Gu}.

One should also keep in mind that there are numerous finitely generated
algebras $A$ which do not have any finite-dimensional modules.
In this case, $\md(A,d)$ is empty for all $d$.

\subsection{Notation}
Throughout, let $K$ be an algebraically closed field.
By a \emph{module} we mean a finite-dimensional
left module, if not mentioned otherwise.


\section{Schemes of representations}


\subsection{Quivers with relations}
A \emph{quiver} is a quadruple $Q = (Q_0,Q_1,s,t)$ where
$Q_0$ and $Q_1$ are finite sets of \emph{vertices} and
\emph{arrows}, respectively, and $s,t\df Q_1 \to Q_0$ are
maps.
We say that an arrow $a$ \emph{starts} in $s(a)$ and ends in $t(a)$.

A \emph{path} of length $m \ge 1$ in $Q$ is a sequence
$p = (a_1,\ldots,a_m)$ of arrows $a_1,\ldots,a_m \in Q_1$ such that
$s(a_i) = t(a_{i+1})$ for all $1 \le i \le m-1$.
We set $\length(p) := m$.
Define $s(p) := s(a_m)$ and $t(p) := t(a_1)$.
Let
$$
\supp(p) := \{ s(a_1),\ldots,s(a_m),t(a_1) \}
$$
be the \emph{support} of $p$, and let $Q_p$ be the subquiver of $Q$ with vertices $\supp(p)$ and arrows $\{ a_1,\ldots,a_m \}$.
An arrow $a$ \emph{occurs} in $p$ if $a = a_i$ for some $1 \le i \le m$.
The path $p$ is a
\emph{cycle} if we have additionally $s(p) = t(p)$.
We usually write $a_1 \cdots a_m$ instead of $(a_1,\ldots,a_m)$.
A
\emph{loop} is a cycle of length one.
Clearly, the $t$-fold compositon $p^t := p \cdots p$ of a cycle $p$ is again a cycle.
A cycle $(a_1,\ldots,a_m)$ is \emph{primitive}
if the vertices $s(a_1),\ldots,s(a_m)$ are pairwise different.

\begin{Lem}\label{lem:primitivecycles}
For each non-primitive cycle $(a_1,\ldots,a_m)$ there exists some
$1 \le u \le v \le m$
such that $v - u < m - 1$, $(a_u,\ldots,a_v)$ is a primitive cycle and
$(a_1,\ldots,a_{u-1},a_{v+1},\ldots,a_m)$ is a cycle.
\end{Lem}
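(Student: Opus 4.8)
The plan is to reduce the assertion to an elementary combinatorial statement about the sequence of vertices traversed by the cycle, and then to extract the desired sub-cycle by a minimality argument.

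First I would set up notation: write $w_k := s(a_k)$ for $1 \le k \le m$ and put $w_0 := w_m$. Since $(a_1,\ldots,a_m)$ is a cycle we have $t(a_1) = s(a_m) = w_m = w_0$, and in general $t(a_k) = s(a_{k-1}) = w_{k-1}$, so each arrow $a_k$ runs from $w_k$ to $w_{k-1}$. The basic fact I would establish by unwinding the definitions of path and cycle (checking composability at the junctions and the condition $s(\text{last})=t(\text{first})$, using $w_0 = w_m$) is the following: for $1 \le u \le v \le m$, the contiguous sub-path $(a_u,\ldots,a_v)$ is a cycle if and only if $w_{u-1} = w_v$; moreover, whenever this holds, the complementary sequence $(a_1,\ldots,a_{u-1},a_{v+1},\ldots,a_m)$ is again a cycle, as soon as it is non-empty, i.e.\ as soon as $v - u < m-1$ (its number of arrows being $m-1-(v-u)$).

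Next I would consider the set $S$ of pairs $(u,v)$ with $1 \le u \le v \le m$ and $w_{u-1} = w_v$. It is non-empty, since $(1,m) \in S$, so I may pick $(u,v) \in S$ with $v - u$ minimal, and claim this pair works. To see $v - u < m-1$: non-primitivity of $(a_1,\ldots,a_m)$ yields $1 \le i < j \le m$ with $w_i = w_j$, hence $(i+1,j) \in S$ with $j - (i+1) \le m-2$, so minimality forces $v - u \le m-2$. To see that $(a_u,\ldots,a_v)$ is primitive, i.e.\ that $w_u,\ldots,w_v$ are pairwise distinct: if $w_p = w_q$ for some $u \le p < q \le v$, then $(p+1,q) \in S$ with $q - (p+1) < v-u$, contradicting minimality. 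Then the basic fact above shows that $(a_u,\ldots,a_v)$ is a primitive cycle with $v - u < m-1$ and that $(a_1,\ldots,a_{u-1},a_{v+1},\ldots,a_m)$ is a (non-empty) cycle, which is exactly what is claimed.

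I do not expect a genuine obstacle here; the only delicate point is the bookkeeping forced by the cyclic convention $w_0 = w_m$ — especially verifying the statement about the complementary cycle in the boundary cases $u=1$ and $v=m$, and confirming that each sub-arc invoked above is a legitimate non-empty sub-path with indices in the allowed range — but this is entirely routine, and the minimality argument supplies the actual content of the lemma.
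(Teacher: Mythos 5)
Your proof is correct and follows essentially the same route as the paper: both arguments minimize $v-u$ over pairs with the closing condition $s(a_v)=t(a_u)$ (your $w_{u-1}=w_v$), use non-primitivity to force the minimum below $m-1$, deduce primitivity of the extracted sub-cycle from minimality, and check the junction condition for the complementary cycle. The only difference is that you spell out the boundary-case bookkeeping that the paper leaves implicit.
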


\begin{proof}
Let $(a_1,\ldots,a_m)$ be a cycle in $Q$ which is not primitive.
Thus there exists some
$1 \leq u \leq v \leq m$ such that $s(a_v) = t(a_u)$ and
$$
v - u = \min \{ j - i \mid \text{$1 \leq i \leq j \leq m$ and
$s(a_j) = t(a_i)$} \} < m-1.
$$
Then $(a_u,\ldots,a_v)$ is a primitive cycle,
and the sequence $(a_1, \ldots, a_{u-1},a_{v+1},\ldots,a_m)$
is a cycle.
\end{proof}

Additionally to the paths of length $m \ge 1$, there is a path $e_i$ of length $0$ with $s(e_i) = t(e_i) = i$ for each vertex $i \in Q_0$.
Let $KQ$ be the path algebra of $Q$.
(The set of all paths (including the length $0$ paths) form a $K$-basis
of $KQ$, and the multiplication is defined via the composition of paths.)

A \emph{relation} for $Q$ is a linear combination
$$
\rho = \sum_{i=1}^t \lambda_i p_i
$$
where $\lambda_i \in K^*$ and the $p_i$ are pairwise different
paths of length at least two in $Q$ such that $s(p_i) = s(p_j)$ and
$t(p_i) = t(p_j)$ for all $1 \le i,j \le t$.
We say that a path $p$ \emph{occurs} in $\rho$ if $p = p_i$
for some $i$, and an arrow $a$ \emph{occurs} in $\rho$
if $a$ occurs in some $p_i$.
Let $Q_\rho$ be the union of the subquivers $Q_{p_1},\ldots,Q_{p_t}$.

An ideal $\cI$ in $KQ$ is \emph{admissible} if the following hold:
\begin{itemize}

\item[(i)]
$\cI$ is generated by a set $R = \{ \rho_1,\ldots,\rho_m \}$ of relations;

\item[(ii)]
There exists some $m \ge 2$ such that all paths of length at least $m$
are contained in $\cI$.

\end{itemize}
From now we assume that $A = KQ/\cI$ with $Q$ a quiver and $\cI$
an admissible ideal.

\subsection{Schemes of representations}
Let $A = KQ/\cI$ as before.
Assume that $Q_0 = \{ 1,\ldots,n \}$.
A \emph{representation} of $Q$ is a tuple
$M = (M(i),M(a))$, where $M(i)$ is a
finite-dimensional $K$-vector space for each vertex $1 \le i \le n$, and
$M(a)\df M(s(a)) \to M(t(a))$ is a $K$-linear map for each arrow
$a \in Q_1$.
For a path $p = (a_1,\ldots,a_m)$ in $Q$ and a representation
$M = (M(i),M(a))$ define
$$
M(p) := M(a_1) \cdots M(a_m),
$$
and for a relation $\rho = \sum_{i=1}^t \lambda_i p_i$ set
$$
M(\rho) = \sum_{i=1}^t \lambda_i M(p_i).
$$
A representation $M = (M(i),M(a))$ of $Q$ is a \emph{representation} of $A$
if $M(\rho) = 0$ for all relations $\rho \in \cI$.

For $\bd = (d_1,\ldots,d_n)$ let
$$
\rep(Q,\bd) :=  \prod_{a \in Q_1} \Hom_K(K^{d_{s(a)}},K^{d_{t(a)}})
$$
be the affine space of representations $M = (M(i),M(a))$ of $Q$ with
$M(i) = K^{d_i}$ for $1 \le i \le n$, and let
$$
\rep(A,\bd) = \{ M = (M(a)) \in \rep(Q,\bd) \mid
M(\rho) = 0 \text{ for all relations $\rho$ in } \cI
\}.
$$
Clearly, $\rep(A,\bd)$ is an affine scheme.

The group
$$
G(\bd) := \prod_{i \in Q_0} \GL_{d_i}(K)
$$
acts on $\rep(A,\bd)$ by conjugation.
The $G(\bd)$-orbits are in bijection with the set of isomorphism
classes of $A$-modules with dimension vector $\bd$.

Recall that an $A$-module $M$ is \emph{rigid} if $\Ext_A^1(M,M) = 0$.
It follows from Voigt's Lemma (see for example \cite[Proposition~1.1]{Ga})
that for a rigid module $M$ in $\rep(A,\bd)$, the orbit $\cO_M$ of $M$ is
open.
In particular, in this case the closure of $\cO_M$ is an irreducible component of $\rep(A,\bd)$.
Consequently, if $A$ is geometrically irreducible and $M$ and $N$ are rigid $A$-modules with the same dimension vector, 
then $M \cong N$. 

For some basic properties of the schemes $\md(A,d)$ and $\rep(A,\bd)$ we refer to \cite{Bo} and \cite{Ga}.

\subsection{Minimal algebras}\label{sec:glue}
We call $A = KQ/\cI$ \emph{minimal} if $\cI = 0$, or if for each set
$R = \{ \rho_1,\ldots,\rho_m \}$ of relations for $Q$ such that $R$ generates $\cI$ the following hold:
\begin{itemize}

\item[(i)]
For each $1 \le k \le m-1$ the quiver
$$
(Q_{\rho_1} \cup \cdots \cup Q_{\rho_k}) \cap
(Q_{\rho_{k+1}} \cup \cdots \cup Q_{\rho_m})
$$
contains at least one arrow.

\item[(ii)]
$Q = Q_{\rho_1} \cup \cdots \cup Q_{\rho_m}$.

\end{itemize}

If $A = KQ/\cI$ is not minimal, then there are algebras
$A' = KQ'/\cI'$ and $A'' = KQ''/\cI''$
with $Q'$ and $Q''$ proper subquivers of $Q$ such that
the following hold:
\begin{itemize}

\item[(a)]
$Q_0 = Q_0' \cup Q_0''$ (this union is not necessarily disjoint);

\item[(b)]
$Q_1 = Q_1' \cup Q_1''$ and $Q_1' \cap Q_1'' = \varnothing$;

\item[(c)]
For each dimension vector $\bd = (d_1,\ldots,d_n)$ we have
$$
\rep(A,\bd) \cong \rep(A',\bd') \times \rep(A'',\bd'').
$$
Here $\bd'$ and $\bd''$ are obtained from $\bd$ by deleting the
entries $d_i$ with $i$ not a vertex of $Q'$, respectively $Q''$.

\end{itemize}

More precisely, if there is an arrow
$a \in Q_1$ with $a \notin Q_{\rho_1} \cup \cdots \cup Q_{\rho_m}$, then $A' := KQ'/\cI'$, 
where $Q'$ is obtained from $Q$ by deleting $a$ and $\cI' := \cI \cap KQ'$, 
and $A'' := KQ''$, where $Q''$ is the subquiver of $Q$ with vertices $\{ s(a),t(a) \}$
and a single arrow $a$.
Next, assume (ii) holds, and assume that $(Q_{\rho_1} \cup \cdots \cup Q_{\rho_k}) \cap (Q_{\rho_{k+1}} \cup \cdots \cup Q_{\rho_m})
$
contains no arrow.
Then $Q' := Q_{\rho_1} \cup \cdots \cup Q_{\rho_k}$, $\cI' := (\rho_1,\ldots,\rho_k)$, $Q'' := Q_{\rho_{k+1}} \cup \cdots \cup Q_{\rho_m}$
and $\cI'' := (\rho_{k+1},\ldots,\rho_m)$.

Thus, if we want to classify geometrically irreducible algebras, we can
restrict our attention to the minimal ones.

\subsection{Classification of minimal geometrically irreducible algebras}

\begin{Conj}\label{conj:main}
Assume that $A = KQ/\cI$ is minimal geometrically irreducible and
that $\cI \not= 0$.
Then $Q$ has at most two vertices.
\end{Conj}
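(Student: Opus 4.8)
The plan is to argue by contradiction: suppose $A = KQ/\cI$ is minimal geometrically irreducible with $\cI \ne 0$ and $|Q_0| \ge 3$. The key structural facts we already have at our disposal are: every oriented cycle in $Q$ is a power of a loop, so there is at most one loop at each vertex; and by the minimality hypothesis, for any generating set $R = \{\rho_1,\dots,\rho_m\}$ of relations the union $\bigcup_i Q_{\rho_i}$ is all of $Q$ and the "overlap graph" on the $Q_{\rho_i}$ is connected. So the first step is to pin down the combinatorics of a single relation $\rho$: since $\rho$ is a sum of parallel paths $p_1,\dots,p_t$ of length $\ge 2$ with common source $s$ and common target $t$, and $Q_\rho = \bigcup Q_{p_i}$ is typically small, we want to show that when three or more vertices are present the relations cannot "fit together" compatibly with geometric irreducibility.

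First I would reduce to understanding $\rep(A,\bd)$ for carefully chosen small dimension vectors, using the rigidity criterion emphasized at the end of Section~2.2: if $A$ is geometrically irreducible and $M, N$ are rigid modules with the same dimension vector then $M \cong N$. The strategy is then to exhibit, on a quiver with $\ge 3$ vertices and a nonzero admissible ideal, either two non-isomorphic rigid modules of the same dimension vector, or a single dimension vector $\bd$ for which $\rep(A,\bd)$ visibly has at least two irreducible components (e.g. by a dimension count: an orbit closure of too-small dimension sitting inside a component of larger dimension, together with a second component). A natural source of such $\bd$ is to take $\bd$ supported on $Q_\rho$ for a minimal relation $\rho$, with small entries ($1$'s and $2$'s), where $\rep(A,\bd)$ can be analyzed by hand: the vanishing locus cut out by $M(\rho) = 0$ inside the affine space $\rep(Q,\bd)$ is a determinantal-type condition whose components can be separated according to which of the paths $p_i$ acts as zero.

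The heart of the argument, and the step I expect to be the main obstacle, is handling the glueing/minimality condition rigorously: a priori the relations of $\cI$ interact in complicated ways, and "at least two vertices" versus "at least three" is a delicate threshold because two-vertex examples like $B(m,2)$ and $B(m,m)$ do survive. So one cannot simply say "large quivers have lots of room"; one must genuinely use that the $Q_{\rho_i}$ overlap in arrows and cover $Q$, yet the total vertex set is $\ge 3$, to force the existence of a subquiver configuration — likely a path of length $2$, or a branching at some vertex, lying in the support of the relations but with "too much freedom" — that produces the reducibility. I would try to isolate a minimal offending configuration (perhaps: three vertices $i \to j \to k$ with a relation whose support meets all three, possibly with loops at some of them) and do the explicit $\rep(A,\bd)$ computation there, then propagate back: if $A$ restricted to such a subquiver-with-induced-relations is geometrically reducible, one needs a lemma (presumably proved elsewhere in the paper or reducible to the product decomposition in Section~\ref{sec:glue}) that geometric irreducibility passes to the relevant "subalgebras" obtained by restricting dimension vectors. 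Assembling these pieces — the local reducibility computation plus the descent from $A$ to the offending piece — is where the real work lies; the rest is bookkeeping with paths and relations.

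Finally, I would cross-check the argument against the conjectured classification: any proof must allow $KQ$ hereditary (where $\cI = 0$, excluded by hypothesis), $K[X]/(X^m)$ (one vertex), and $B(m,2)$, $B(m,m)$ (two vertices), and must break precisely at the third vertex. This sanity check suggests the inductive/reduction step should be phrased so that adding a third vertex to any two-vertex minimal geometrically irreducible algebra — in a way consistent with minimality, so genuinely glued via a shared arrow — already fails, which is a cleaner statement to attack than the general case and which, combined with an induction on $|Q_0|$, would yield the conjecture.
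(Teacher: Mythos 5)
The statement you are asked to prove is labelled a \emph{Conjecture} in the paper: the authors explicitly do not prove it here, and they state that even the special cases they can handle (quivers with at most two vertices, and ``a large class of quivers'' in general) are deferred to the separate papers [BS1] and [BS2]. So there is no proof in this paper to compare against, and the expectation is not that a short argument exists.

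Your proposal is a strategy outline, not a proof, and the gap is exactly where you yourself locate it: the ``heart of the argument'' is never carried out. Concretely, (1) you never exhibit, for any three-vertex configuration compatible with minimality, a dimension vector $\bd$ for which $\rep(A,\bd)$ is reducible, nor two non-isomorphic rigid modules of equal dimension vector --- you only assert that such a configuration ``likely'' exists; (2) the descent step is not established: passing to a full subquiver $E$ does give the geometrically irreducible quotient $A_E = A/Ae_{Q_0\setminus E}A$ (the paper notes this at the start of Section~3), but the induced ideal on $E$ may become zero or lose minimality, so ``the offending three-vertex piece is reducible'' does not follow formally from anything you wrote; and (3) the proposed induction on $|Q_0|$ has as its base case the full two-vertex classification, which is itself the main theorem of [BS1] and is not available as a black box inside this paper. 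The sentence ``adding a third vertex to any two-vertex minimal geometrically irreducible algebra already fails'' is a restatement of the conjecture, not a reduction of it: the space of ways a third vertex can be attached (via new arrows, new relations overlapping the old ones in arrows, possibly with a loop at the new vertex) is large, and the paper's own machinery (Lemmas~\ref{lem:reduction1}--\ref{lem:reduction3}, Proposition~\ref{prop:Gorenstein}, Proposition~\ref{prop:nobigrelations}) constrains but does not eliminate these configurations. Your sanity checks against $K[X]/(X^m)$, $B(m,2)$ and $B(m,m)$ are sensible, but they only confirm that any proof must be delicate at the two-vertex threshold; they do not supply the missing reducibility computation.
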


In \cite{BS1} we prove Conjecture~\ref{conj:main} for a large
class of quivers $Q$.

Recall Proposition~\ref{prop:intro2c} describes all geometrically irreducible algebras $A = KQ/\cI$ with $Q$ having just one vertex.

The following theorem deals with the two vertex case and is the main result of \cite{BS1}.

\begin{Thm}\label{thm:twovertices}
Assume that $A = KQ/\cI$ is minimal geometrically irreducible
and that $\cI \not= 0$,
and assume that $Q$ has at most two vertices.
Then (up to isomorphism), $A$ is defined by one of the
following quivers with relations:
\begin{itemize}

\item[(i)]
$Q = \xymatrix{\bullet \ar@(ul,dl)_{\vep}}$ and
$\cI = (\vep^m)$ for some $m \ge 2$;\vspace{0.3cm}

\item[(ii)]
$Q = \xymatrix{\bullet \ar@(ul,dl)_{\vep_0} & \bullet \ar[l]_\alpha\ar@(ur,dr)^{\vep_1}}$
and
$\cI = (\vep_0^m,\vep_1^m,\rho_2)$
for some $m \ge 2$;\vspace{0.3cm}

\item[(iii)]
$Q = \xymatrix{\bullet \ar@(ul,dl)_{\vep_0} & \bullet \ar[l]_\alpha\ar@(ur,dr)^{\vep_1}}$
and
$\cI = (\vep_0^m,\vep_1^m,\rho_m)$
for some $m \ge 3$.

\end{itemize}
\end{Thm}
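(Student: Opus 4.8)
The one-vertex case is immediate: by Proposition~\ref{prop:intro2c} geometric irreducibility forces $A\cong K[X]/(X^m)$, and $\cI\neq 0$ forces $m\ge 2$, while minimality is automatic since $\{\vep^m\}$ is then the only possible generating set of relations and $Q=Q_{\vep^m}$; this is case~(i). So from now on assume that $Q$ has exactly two vertices, say $0$ and $1$. As recalled in the introduction, geometric irreducibility implies that every oriented cycle in $Q$ is a power of a loop, so there is at most one loop at each vertex and there cannot be arrows in both directions between $0$ and $1$ (a pair of opposite arrows would compose to a cycle that is not a loop power). Minimality forces $Q$ to be connected --- a disconnected quiver admits the splitting of Section~\ref{sec:glue} and is thus not minimal when $\cI\neq 0$ --- so there is at least one arrow between $0$ and $1$. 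Renaming the vertices if necessary, $Q$ therefore consists of a loop $\vep_0$ at $0$ (possibly absent), a loop $\vep_1$ at $1$ (possibly absent), and $k\ge 1$ arrows $\alpha_1,\dots,\alpha_k$ from $1$ to $0$.

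\textbf{Step 1: pinning down $Q$.}
If both loops are absent then $Q$ has no path of length $\ge 2$, hence $\cI=0$, a contradiction. To exclude the presence of exactly one loop, say only $\vep_0$, I would use a component-counting argument: every relation is then a linear combination of paths $\vep_0^j$ or of paths $\vep_0^j\alpha_i$, and for $\bd=(d_0,d_1)$ with $d_0\ge 2$ and $d_1$ large $\rep(A,\bd)$ has two irreducible components of different dimensions --- one is the closure of the locus where $M(\vep_0)$ has generic Jordan type (which forces the images of the $M(\alpha_i)$ into a subspace of dimension $<d_0$), the other contains the locus $M(\vep_0)=0$, on which the remaining relations are vacuous, so that this locus already has dimension $d_0d_1$, exceeding the dimension of the first component once $d_1\gg 0$. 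The same type of estimate --- contracting $M(\vep_0)$ and $M(\vep_1)$ to $0$ and comparing dimensions --- rules out $k\ge 2$ parallel arrows. Hence $Q$ has loops $\vep_0,\vep_1$ and a single connecting arrow $\alpha\colon 1\to 0$, i.e.\ $Q$ is the quiver appearing in (ii) and (iii).

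\textbf{Step 2: pinning down $\cI$.}
The paths of length $\ge 2$ in $Q$ are the $\vep_0^j$ $(j\ge 2)$, the $\vep_1^j$ $(j\ge 2)$ and the $\vep_0^j\alpha\vep_1^l$ $(j+l\ge 1)$, and since a relation has a fixed source and a fixed target, $\cI$ is generated by relations of three kinds: pure in $\vep_0$, pure in $\vep_1$, and ``mixed''. Admissibility gives $\vep_0^{m_0},\vep_1^{m_1}\in\cI$ for some $m_0,m_1\ge 2$, and minimality forces $\alpha$ to occur in a relation, hence there is at least one mixed relation. First I would show that $m_0=m_1=:m$ and that there are no other pure relations: otherwise one either obtains a reducible $\rep(A,\bd)$ (by contracting $M(\alpha)$ to $0$ and invoking Proposition~\ref{prop:intro2c}) or a pair of non-isomorphic rigid $A$-modules with the same dimension vector, which is impossible in a geometrically irreducible algebra (as observed just before Section~\ref{sec:glue}). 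Since the loop relations are $\vep_i^m$, every mixed relation is congruent modulo $\cI$ to one supported on exponents $0\le j,l\le m-1$; a change of basis among the arrows $\vep_0,\vep_1,\alpha$, still compatible with minimality, together with further reducibility arguments, then forces the mixed part of $\cI$ to be generated by a single relation $\rho_n$ for some $n\ge 2$. Hence $A\cong B(m,n)$.

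\textbf{Step 3: the value of $n$, and the main obstacle.}
It remains to show that $B(m,n)$ is geometrically reducible whenever $3\le n\le m-1$, once again by exhibiting a dimension vector $\bd$ together with two components of $\rep(A,\bd)$: for an ``intermediate'' $n$ the relation $\rho_n$ fails to rigidify $\rep(A,\bd)$ at a suitable $\bd$, so a stratum on which $M(\vep_0)$ or $M(\vep_1)$ degenerates acquires the wrong dimension. This leaves precisely $n=2$ (case~(ii), $m\ge 2$) and $n=m$ (case~(iii), $m\ge 3$; the value $m=2$ is already covered by (ii), since there $\rho_2=\rho_m$). The heart of the argument --- and the step I expect to be the main obstacle --- is the control of the mixed relations in Steps~2 and~3: excluding several independent mixed generators or a mixed generator not equivalent to a single $\rho_n$, and proving reducibility of $B(m,n)$ for intermediate $n$. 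Both reduce to writing down explicit degenerating families of representations and computing the dimensions of the components they produce, and that is where essentially all the real work lies. (We do not need the converse statements here --- that $B(m,2)$ and $B(m,m)$ are themselves geometrically irreducible; the case of $B(m,2)$ is treated in \cite{BS2}.)
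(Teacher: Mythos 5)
This theorem is stated in the paper only as the main result of the companion paper \cite{BS1}; the present article contains no proof of it, so there is nothing in-paper to measure your proposal against. Judged on its own terms, your proposal correctly assembles the easy part: case (i) follows from Proposition~\ref{prop:intro2c}, and Lemmas~\ref{lem:reduction1}--\ref{lem:reduction3} together with minimality do pin the two-vertex quiver down to at most one loop per vertex, no pair of opposite arrows, and at least one connecting arrow. But everything after that is a plan rather than a proof, and you say so yourself (``that is where essentially all the real work lies'').

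Concretely, the following steps are missing, not merely compressed. First, the exclusion of the quiver with exactly one loop and the exclusion of $k\ge 2$ parallel arrows: your ``component-counting argument'' is never instantiated --- you do not first classify the possible admissible ideals in these quivers, so the two loci whose dimensions you propose to compare are not actually defined, and the claimed inequality is not verified for any concrete $\cI$. (For the single-loop case a cleaner route is available inside this paper: minimality forces a mixed relation $\sum_{j\ge 1}\lambda_j\vep_0^{j}\alpha_i$, which makes the rigid projective $Ae_1$ fail to be locally free at vertex $0$, contradicting Lemma~\ref{lem:rigidlocfree1}; you do not use this.) Second, the reduction of the mixed part of $\cI$ to a single generator equivalent to some $\rho_n$ after a change of basis is asserted via ``further reducibility arguments'' with no indication of which dimension vectors, which strata, or which dimension estimates are involved; this is the combinatorial core of the classification and cannot be waved through. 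Third, the exclusion of $B(m,n)$ for $3\le n\le m-1$ is likewise only announced. Until these three points are supplied with explicit dimension vectors, explicit loci, and actual dimension counts, the proposal does not establish the theorem.
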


The relations $\rho_m$ appearing in parts (ii) and (iii) of
Theorem~\ref{thm:twovertices} are defined in Section~\ref{sec:intro1.2}.

Clearly, the algebras mentioned in (i) are geometrically irreducible.
We show in \cite{BS2} that the algebras listed in (ii) are geometrically irreducible.
It remains open if the algebras in (iii) are geometrically irreducible
as well.


\section{Cycles and loops}


As before, let $A = KQ/\cI$ be a finite-dimensional $K$-algebra
with $Q = (Q_0,Q_1,s,t)$ a finite quiver and $\cI$ an admissible ideal
in the path algebra $KQ$.
Let $Q_0 = \{ 1,\ldots,n \}$.

Clearly, if $A$ is geometrically irreducible, then for all subsets
$E \subseteq Q_0$ we know that
$$
A_E := A/Ae_EA
$$
is also geometrically irreducible, where
$$
e_E := \sum_{i \in E} e_i.
$$

\begin{Lem}\label{lem:reduction1}
Assume that $A$ is geometrically irreducible.
Then any primitive cycle in $Q$ is a loop.
\end{Lem}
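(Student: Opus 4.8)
The plan is to argue by contradiction: suppose $Q$ contains a primitive cycle $(a_1,\ldots,a_m)$ with $m\ge 2$, and derive a contradiction with geometric irreducibility. The key point is that a primitive cycle involves $m$ \emph{distinct} vertices, so after passing to the quotient $A_E$ where $E=Q_0\setminus\{s(a_1),\ldots,s(a_m)\}$ — which is again geometrically irreducible by the remark preceding the lemma — we may assume $Q_0=\{s(a_1),\ldots,s(a_m)\}$ is exactly the support of the cycle. It still need not be the case that the only arrows of $Q$ are the $a_i$, but the subquiver on these vertices at least contains an oriented $m$-cycle, and I would like to reduce to the ``pure cycle'' situation. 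One clean way: choose the dimension vector $\bd$ with $d_i=1$ at every vertex. Then $\rep(A,\bd)$ sits inside $\rep(Q,\bd)=K^{Q_1}$, a product of one-dimensional spaces indexed by the arrows, and the group $G(\bd)=(K^*)^n$ acts by rescaling.

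Next I would examine $\rep(A,\bd)$ for this $\bd$ concretely. The path $c:=a_1\cdots a_m$ is a cycle of length $m\ge 2$, hence lies in the radical square, and since $\cI$ is admissible some power $c^N$ of it lies in $\cI$; moreover for the $1$-dimensional representation attached to a point $(x_a)_{a\in Q_1}$ the scalar $M(c)=x_{a_1}\cdots x_{a_m}$ is nilpotent as an endomorphism of a $1$-dimensional space precisely when it is zero. So already the relations force $x_{a_1}\cdots x_{a_m}=0$ at every point of $\rep(A,\bd)$, i.e. $\rep(A,\bd)$ is contained in the union of the coordinate hyperplanes $\{x_{a_i}=0\}$, $1\le i\le m$. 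If I can show that each such hyperplane actually meets $\rep(A,\bd)$ in a set that is not contained in the others — concretely, that for each $i$ there is a representation with $x_{a_i}=0$ but $x_{a_j}\neq 0$ for the other cycle arrows (e.g. the representation that is zero on $a_i$ and whatever is forced elsewhere, with all other cycle arrows set to $1$, which automatically kills every relation because every relation is a combination of paths of length $\ge 2$ through the cycle and hence passes through $a_i$ once the cycle vertices are all in the support) — then $\rep(A,\bd)$ has at least $m\ge 2$ distinct irreducible components of the same dimension meeting the dense torus orbit, contradicting irreducibility. The cleanest version: the two points obtained by setting exactly $x_{a_1}=0$, resp. exactly $x_{a_2}=0$, and all other arrow-coordinates equal to $1$, both lie in $\rep(A,\bd)$ (any relation $\rho=\sum\lambda_kp_k$ in $\cI$ has all $p_k$ of length $\ge 2$ supported on these vertices, so — and this is the part that needs the primitivity/support reduction — each $p_k$ must traverse both $a_1$ and $a_2$, hence $M(p_k)=0$ in either case), and these two points lie in different $G(\bd)$-orbit closures, in fact in different irreducible components since each lies in a single coordinate hyperplane and is not a limit of the other's orbit.

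I expect the main obstacle to be making precise the claim ``every relation in $\cI$, once restricted to $A_E$, is supported on all of the cycle arrows,'' since a priori $\cI$ could contain relations through these vertices that do not pass through $a_1$ (for instance if $Q$ has extra arrows among the cycle vertices). The honest fix is to be careful about which arrows survive: I would take $E$ to kill all vertices \emph{outside} $\supp(c)$, and then further argue at the level of the one-dimensional representation space that setting every arrow-coordinate to $0$ except those of the cycle, and additionally setting $x_{a_1}=0$, always gives a point of $\rep(A,\bd)$ — because then $M(p)=0$ for every path $p$ that uses any non-cycle arrow or uses $a_1$, and the only paths of length $\ge 2$ among the cycle vertices that avoid all non-cycle arrows and avoid $a_1$ are sub-paths $a_{u}\cdots a_{v}$ of $c$ with $1\notin\{u,\ldots,v\}$, which by primitivity (distinct vertices!) are not cycles and hence, having distinct source and target, cannot by themselves be annihilated-or-not issues — more to the point, I only need that \emph{each relation} $\rho$, being a sum of paths with a common source and common target, either is already killed or, by repeating the above for a generic choice of the free cycle-coordinates and invoking that $\rep(A,\bd)$ is cut out by equations, forces a coordinate hyperplane condition; carrying this bookkeeping through cleanly — perhaps by an explicit choice of two $\GL$-orbits whose closures are distinct components — is where the real work lies, and it is essentially the same elementary argument Bongartz uses for cycles, adapted to the scheme-theoretic setting.
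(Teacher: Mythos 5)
There is a genuine gap, and it is the one you half-flag yourself: the witnesses you propose need not lie in $\rep(A,\bd)$. You want, for each $i$, a point with $x_{a_i}=0$ and $x_{a_j}=1$ for $j\neq i$ (non-cycle arrows set to $0$), and you justify membership by claiming that every path occurring in a relation must traverse the deleted arrow $a_i$. That is false: a relation may contain a proper segment $a_u\cdots a_v$ of the cycle that avoids $a_i$, and on such a segment your point evaluates to $1$, not $0$. Concretely, take the $3$-cycle $a_1\colon 2\to 1$, $a_2\colon 3\to 2$, $a_3\colon 1\to 3$ with $\cI=(a_1a_2,\,a_2a_3,\,a_3a_1)$ (an admissible ideal); then the point $x_{a_1}=0$, $x_{a_2}=x_{a_3}=1$ violates the relation $a_2a_3$. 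Your aside that such segments ``having distinct source and target, cannot by themselves be annihilated-or-not issues'' does not rescue this: a single path of length $\ge 2$ is a perfectly good relation, regardless of whether it is a cycle. So the assertion that each coordinate hyperplane meets $\rep(A,\bd)$ outside the others is unproven, and without it the containment of $\rep(A,\bd)$ in $\bigcup_i\{x_{a_i}=0\}$ says nothing about reducibility (the whole scheme could sit inside one hyperplane).

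The repair is to use much sparser witnesses, and this is exactly what the paper does: for each $i$ take $M_i$ with $M_i(a_i)=\id_K$ and \emph{all} other arrows zero. Since $a_i$ is not a loop, every path of length $\ge 2$ evaluates to $0$ on $M_i$, so $M_i\in\rep(A,\bd)$ with no hypotheses on $\cI$ beyond admissibility. Then each open set $\cU_i=\{M\mid M(a_i)\neq 0\}$ is nonempty, while $\bigcap_i\cU_i=\varnothing$ because a point in the intersection would make the cycle act invertibly on the (nonzero) $1$-dimensional spaces, so $M(c^t)\neq 0$ for all $t$, contradicting that all long paths lie in $\cI$. Finitely many nonempty open sets with empty intersection force reducibility. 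Your setup (dimension vector $\mathbf{1}$ on $\supp(p)$, the vanishing of $x_{a_1}\cdots x_{a_m}$) is the same as the paper's; the missing idea is to put the nonzero entries on a \emph{single} arrow rather than on all but one, which makes the membership check trivial and lets you phrase reducibility via disjointness of open sets instead of identifying components.
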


\begin{proof}
Assume that $p = (a_1,\ldots,a_m)$ is a primitive cycle in $Q$
which is not a loop.
Thus we have $\supp(p) = \{ s(a_1),\ldots,s(a_m) \}$ and $m \ge 2$.
Let $\bd = (d_1,\ldots,d_n)$ with
$$
d_i =
\begin{cases}
1 & \text{if $i \in \supp(p)$},\\
0 & \text{otherwise}.
\end{cases}
$$
We claim that $\rep(A,\bd)$ is reducible.
Clearly, for each $1 \le i \le m$ there exists a representation
$M_i \in \rep(A,\bd)$ such that for $a \in Q_1$ we have
$$
M_i(a) =
\begin{cases}
\id_K & \text{if $a = a_i$},\\
0 & \text{otherwise}.
\end{cases}
$$
Thus the open set
$\cU_i := \{ M \in \rep(A,\bd) \mid \rk(M(a_i)) \ge 1 \}$
is non-empty.
However, the intersection $\bigcap_{i=1}^m \cU_i$ has to be empty.
Otherwise, we would get some $M \in \rep(A,\bd)$ with
$M((a_1 \cdots a_m)^t)\not= 0$ for all $t \ge 1$, a contradiction
since $A$ is finite-dimensional.
Thus $\rep(A,\bd)$ cannot be irreducible.
\end{proof}

\begin{Lem}\label{lem:reduction2}
Assume that $A$ is geometrically irreducible.
Then for each vertex $i$ of $Q$ there is at most one
loop $\vep$ with $s(\vep) = i$.
\end{Lem}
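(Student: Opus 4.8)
The plan is to argue by contradiction: suppose $A$ is geometrically irreducible but there are two distinct loops $\vep_0, \vep_1$ at a vertex $i$. By passing to $A_E = A/Ae_EA$ for $E = Q_0 \setminus \{i\}$, which remains geometrically irreducible, we may assume $Q$ has a single vertex $i$ and at least two loops. So it suffices to produce, for such an algebra, some dimension vector $\bd = (d)$ for which $\rep(A,\bd)$ is reducible. Since $\cI$ is admissible, there is $m \ge 2$ with $\vep_0^m, \vep_1^m \in \cI$, and more generally every sufficiently long path in the $\vep_j$ lies in $\cI$.

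The key step is to exhibit two non-conjugate rigid modules (or more robustly, two open sets whose union is $\rep(A,\bd)$ but which are disjoint) for a suitable $d$. The cleanest route I would try first is the rigid-module criterion recorded just before Lemma~\ref{lem:reduction1}: if $A$ is geometrically irreducible then rigid modules are determined by their dimension vector. So I would look for two rigid $A$-modules of the same dimension $d$ that are not isomorphic. A natural candidate is $d = 2$: take $M$ with $M(\vep_0) = \left[\begin{smallmatrix} 0 & 1 \\ 0 & 0 \end{smallmatrix}\right]$, $M(\vep_1) = 0$ (and all other loops zero), and $N$ with $M(\vep_1) = \left[\begin{smallmatrix} 0 & 1 \\ 0 & 0 \end{smallmatrix}\right]$, $M(\vep_0) = 0$. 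Both annihilate $\cI$ (each is a nilpotent single Jordan block in one variable, killed by $\vep_j^m$ for $m \ge 2$, and all other relations involve paths of length $\ge 2$ which act as zero). As $A$-modules these are non-isomorphic when $\vep_0 \ne \vep_1$, since an isomorphism would have to intertwine the action of every arrow simultaneously. One then needs $M$ (equivalently $N$) to be rigid, i.e. $\Ext^1_A(M,M) = 0$; this is the place where I expect to do actual work, and where the hypothesis that there are at least two loops should be used to rule out the self-extensions that a single two-dimensional Jordan block would otherwise carry.

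An alternative, if the rigidity computation turns out to be awkward, is to mimic the covering-by-two-non-meeting-opens strategy of Lemma~\ref{lem:reduction1} directly. With $d=1$ one already has loops acting as scalars, which is too rigid to help, so take $d = 2$ and consider the open subsets $\cU_j = \{ M \in \rep(A,2) \mid \rk(M(\vep_j)) \ge 1 \}$ for $j = 0, 1$: both are non-empty by the modules above. The hoped-for obstruction is that $\cU_0 \cap \cU_1$ is empty, or at least that $\rep(A,2)$ is not the closure of a single orbit; the commuting/compatibility constraints forced by the relations on length-$2$ paths $\vep_0\vep_1$, $\vep_1\vep_0$, etc., lying in $\cI$ should prevent a generic point from having both $M(\vep_0)$ and $M(\vep_1)$ of rank $1$ unless those images and kernels are forced into a rigid common position, splitting the variety.

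I expect the main obstacle to be the rigidity (or irreducibility-obstruction) verification: showing that the candidate module genuinely sits in a component not containing the other, which amounts to a concrete $\Ext^1$ computation for $A = KQ/\cI$ at a $2$-dimensional module, complicated by the fact that $\cI$ is an arbitrary admissible ideal and may impose relations among the loops beyond $\vep_j^m$. The way around this is to note that any relation in $\cI$ is a combination of paths of length $\ge 2$, and on a module where every length-$2$ path in the loops acts as zero (which holds for the Jordan-block modules above since two composed $\left[\begin{smallmatrix} 0 & 1 \\ 0 & 0 \end{smallmatrix}\right]$'s or any product involving a zero matrix vanish), the relations are automatically satisfied and the Ext computation reduces to one over the free algebra $K\langle x_0, x_1, \dots \rangle$ truncated in degree $2$, where the extra loop visibly kills the self-extension that exists in $K[x]/(x^2)$. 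Making that reduction precise and then closing the contradiction via the rigid-module criterion is the heart of the argument.
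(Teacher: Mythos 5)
Your reduction to a one-vertex algebra with $n \ge 2$ loops is exactly the paper's first step, but the core of your argument breaks down at precisely the point you flagged as needing work. The $2$-dimensional module $M$ with $M(\vep_0) = \bbsm 0&1\\0&0\besm$ and all other loops zero is \emph{never} rigid when a second loop is present: setting $M_t(\vep_1) = \bbsm 0&t\\0&0\besm$ still annihilates every path of length $\ge 2$ (a product of two strictly upper triangular $2\times 2$ matrices vanishes), hence satisfies every relation of the admissible ideal, and the $M_t$ are pairwise non-isomorphic because the stabilizer of $M_0(\vep_0)$, namely $\{\,aI + bM_0(\vep_0)\,\}$, fixes each $M_t(\vep_1)$. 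So the extra loop \emph{creates} self-extensions rather than killing them. Your fallback fails too: $\cU_0 \cap \cU_1$ contains the point where both loops act by the same Jordan block, and ``not the closure of a single orbit'' is not a criterion for reducibility. Worse, no argument at $d = 2$ can work uniformly: for $A = K\langle x,y\rangle$ modulo all paths of length $\ge 2$, the variety $\rep(A,2) = \{(X,Y) : X^2 = Y^2 = XY = YX = 0\}$ \emph{is} irreducible --- it is the image of a rank-two vector bundle over $\mathbb{P}^1$ recording the common line $L$ with $\Ima X + \Ima Y \subseteq L \subseteq \Ker X \cap \Ker Y$ --- even though this algebra is not geometrically irreducible.

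The paper detects the reducibility at much larger dimension. First, at $d = \dim A$ the modules $P = {}_AA$ and $I = D(A_A)$ are both rigid with the same dimension, so geometric irreducibility forces $P \cong I$, i.e.\ $A$ is selfinjective. Then at $d-1$ the two non-empty open sets $\{M : \soc(M) \cong S\} = \{M : M \cong \rad(P)\}$ and $\{M : \tp(M) \cong S\} = \{M : M \cong P/\soc(P)\}$ must meet, forcing $\rad(P) \cong P/\soc(P)$; but $\tp(\rad(P)) \cong S^n$ with $n \ge 2$ while $\tp(P/\soc(P))$ is simple. If you want to salvage your strategy, you must replace $d = 2$ by a dimension where genuinely rigid non-isomorphic modules exist, and $P$ and $I$ are the natural candidates.
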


\begin{proof}
We can assume without loss of generality that $Q$ has just one vertex,
and that $Q$ has $n \ge 2$ loops.
Let $P := {_A}A$ and $I := D(A_A)$ (here $D := \Hom_K(-,K)$ denotes the usual $K$-dual) and let $S$ be the simple $A$-module.
Set $d := \dim(P) = \dim(I) = \dim(A)$.
Since $A$ is geometrically irreducible and $P$ and $I$ are both rigid, we get $P \cong I$.
Thus $A$ is selfinjective.
We consider $\rad(P)$ and $P/\soc(P)$.
Since the ideal $\cI$ is by definition generated by paths of length at least
two, we know that $\tp(\rad(P)) \cong S^n$.
Set $\bd = (d-1)$.
We have
$$
\cS :=
\{ M \in \rep(A,\bd) \mid \soc(M) \cong S \} =
\{ M \in \rep(A,\bd) \mid M \cong \rad(P) \}
$$
and
$$
\cT :=
\{ M \in \rep(A,\bd) \mid \tp(M) \cong S \} =
\{ M \in \rep(A,\bd) \mid M \cong P/\soc(P) \} .
$$
The sets $\cS$ and $\cT$ are both non-empty and
open in $\rep(A,\bd)$.
Thus they have to coincide.
It follows that $\tp(\rad(P))$ is simple, a contradiction.
\end{proof}

\begin{Cor}\label{cor:reduction2}
Assume that $A = KQ/\cI$ is local.
Then the following are equivalent:
\begin{itemize}

\item[(i)]
$A$ is geometrically irreducible;

\item[(ii)]
$Q$ has at most one arrow;

\item[(iii)]
$A \cong K[X]/(X^m)$
for some $m \ge 1$.

\end{itemize}
\end{Cor}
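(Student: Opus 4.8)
The statement restates Proposition~\ref{prop:intro2c} under the standing local hypothesis, and the plan is to establish the cycle of implications (i) $\Rightarrow$ (ii) $\Rightarrow$ (iii) $\Rightarrow$ (i). The implication (i) $\Rightarrow$ (ii) is immediate from Lemma~\ref{lem:reduction2}: since $Q$ is local, every arrow of $Q$ is a loop at the unique vertex, and Lemma~\ref{lem:reduction2} forbids more than one such loop. (Lemma~\ref{lem:reduction1} contributes nothing here, because over a single vertex the only primitive cycles are loops.)

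For (ii) $\Rightarrow$ (iii): if $Q$ has no arrows then $A = KQ = K \cong K[X]/(X)$. If $Q$ has exactly one arrow, necessarily a loop $\vep$, then sending $\vep$ to $X$ identifies $KQ$ with the polynomial ring $K[X]$; since $\cI$ is admissible, $A$ is finite-dimensional and hence $\cI \neq 0$, so $\cI = (X^m)$ for a unique $m \geq 1$, and the requirement that an admissible ideal be generated by paths of length $\geq 2$ forces $m \geq 2$. Thus $A \cong K[X]/(X^m)$. For (iii) $\Rightarrow$ (i), take $A = K[X]/(X^m)$; a dimension vector is a single integer $\bd = (d)$, and $\rep(A,\bd)$ is the $\GL_d(K)$-stable closed subset $\{ M \in M_d(K) \mid M^m = 0 \}$ of $M_d(K)$. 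Writing $d = qm + r$ with $0 \leq r < m$, I would observe that every $M$ with $M^m = 0$ is nilpotent of Jordan type a partition of $d$ with all parts $\leq m$, that any such partition is dominated (in the dominance order) by $\mu := (m, \dots, m, r)$ with $q$ entries equal to $m$, and hence that $\rep(A,\bd)$ equals the closure of the single conjugacy class of Jordan type $\mu$; being the closure of one orbit, it is irreducible. Therefore $A$ is geometrically irreducible.

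The whole argument is essentially formal except for one classical input invoked in the last step: the description of nilpotent orbit closures as unions of orbits of dominated Jordan types (equivalently, the monotonicity of $\dim \cO_\lambda$ in the dominance order, which guarantees that $\{M : M^m = 0\}$ has a dense orbit). I do not anticipate any genuine difficulty; the only points that need care are this recollection and the admissibility bookkeeping in (ii) $\Rightarrow$ (iii), in particular ruling out $\cI = 0$ once a loop is present so that $A$ really is finite-dimensional.
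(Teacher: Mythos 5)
Your proposal is correct and follows the route the paper intends: the paper states this corollary immediately after Lemma~\ref{lem:reduction2} with no written proof, the implication (i)~$\Rightarrow$~(ii) being exactly your appeal to that lemma (all arrows at a single vertex are loops), (ii)~$\Rightarrow$~(iii) being the same admissibility bookkeeping, and (iii)~$\Rightarrow$~(i) being the classical irreducibility of $\{M \in M_d(K) \mid M^m = 0\}$ as the closure of the orbit of Jordan type $(m,\ldots,m,r)$, which the paper takes for granted when it calls truncated polynomial rings an ``easy example'' of geometrically irreducible algebras. No gaps.
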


\begin{Lem}\label{lem:reduction3}
Assume that $A$ is geometrically irreducible.
Then any cycle in $Q$ is a power of a loop.
\end{Lem}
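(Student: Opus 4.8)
The plan is to combine the two reductions already established: Lemma~\ref{lem:reduction1}, which says that every primitive cycle in $Q$ is a loop, and Lemma~\ref{lem:reduction2}, which says that at each vertex there is at most one loop. From these I want to conclude that an arbitrary cycle in $Q$ is necessarily of the form $\vep^t$ for a single loop $\vep$. The natural tool is Lemma~\ref{lem:primitivecycles}, which lets me peel off a primitive sub-cycle from any non-primitive cycle, so the argument will be an induction on the length of the cycle.

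\textbf{Proof sketch.} Let $c = (a_1,\dots,a_m)$ be a cycle in $Q$. I argue by induction on $m$. If $c$ is primitive, then by Lemma~\ref{lem:reduction1} it is a loop, so $m = 1$ and $c = \vep$ with $\vep = a_1$; this is the base case and also settles all primitive cycles. If $c$ is not primitive, apply Lemma~\ref{lem:primitivecycles} to obtain indices $1 \le u \le v \le m$ with $v - u < m - 1$ such that $(a_u,\dots,a_v)$ is a primitive cycle and $(a_1,\dots,a_{u-1},a_{v+1},\dots,a_m)$ is again a cycle. By Lemma~\ref{lem:reduction1} the primitive cycle $(a_u,\dots,a_v)$ is a loop $\vep$, so $v = u$ and $a_u = \vep$; note $s(\vep) = t(\vep) =: i$. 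Since $(a_u,\dots,a_v)$ is a cycle sitting inside $c$, the vertex $i$ is visited by $c$, and removing $a_u$ leaves the cycle $c' := (a_1,\dots,a_{u-1},a_{u+1},\dots,a_m)$ of length $m - 1$, which still passes through $i$ (because the endpoints of the deleted arrow $\vep$ coincide with $i$, so $s(a_{u+1}) = i$ when $u < m$, or $t(a_1) = i$ when $u = m$ — in any case $i \in \supp(c')$). By the induction hypothesis $c'$ is a power of a loop, say $c' = \delta^{t}$ for some loop $\delta$ at a vertex $j$. But $c'$ passes through $i$, and the only vertex in $\supp(c')$ is $j = s(\delta)$, so $i = j$. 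Now there is a loop $\vep$ at $i$ (from $c$) and a loop $\delta$ at $i$ (from $c'$); by Lemma~\ref{lem:reduction2} there is at most one loop at $i$, hence $\vep = \delta$. Reinserting $\vep = a_u$ back into $c'$ at position $u$ just inserts one more factor of $\vep$, so $c = \vep^{t+1}$, completing the induction.

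\textbf{Expected main obstacle.} The genuinely mathematical content is entirely carried by the two preceding lemmas, so the only thing to be careful about is the bookkeeping when splitting and reassembling cycles: one must check that the shortened cycle $c'$ really does still pass through the vertex $i$ (so that the induction hypothesis, which pins down the unique vertex of $\supp(c')$, forces that vertex to be $i$), and one must handle the boundary cases $u = 1$ and $u = m$ in the splicing. These are routine but need to be stated cleanly. A cosmetically nicer alternative, which avoids some of this index-chasing, is to first observe that $Q$ has no non-loop primitive cycle, deduce that every vertex lies on at most the loops at that vertex, and then note that any cycle $c$ has $|\supp(c)| = 1$: indeed if $c$ visited two distinct vertices $i \ne i'$ it would contain a path from $i$ to $i'$ and back, which (since $Q$ restricted to $\supp(c)$ contains a cycle through both) would force a primitive cycle of length $\ge 2$, contradicting Lemma~\ref{lem:reduction1}. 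Once $\supp(c) = \{i\}$, every arrow of $c$ is a loop at $i$, there is only one such loop $\vep$ by Lemma~\ref{lem:reduction2}, and hence $c = \vep^{m}$. Either presentation works; I would go with whichever the referee finds more transparent, and the inductive version above is self-contained given the excerpt.
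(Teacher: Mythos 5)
Your proof is correct and follows essentially the same route as the paper's: induct on the length, peel off a primitive sub-cycle via Lemma~\ref{lem:primitivecycles}, identify it as a loop by Lemma~\ref{lem:reduction1}, apply induction to the remaining cycle, and use Lemma~\ref{lem:reduction2} to conclude all the loops coincide. Your version just spells out the bookkeeping (that the shortened cycle still passes through the same vertex) that the paper compresses into one sentence.
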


\begin{proof}
Let $c := (a_1,\ldots,a_m)$ be a cycle.
If $c$ is primitive, then
$c$ is a loop by Lemma~\ref{lem:reduction1}.
Next, assume that $c$ is non-primitive.
Thus by Lemma~\ref{lem:primitivecycles} there is some
$1 \le q \le p \le m$ with $p-q < m-1$ such that
$(a_q,\ldots,a_p)$ is a primitive cycle and
$(a_1,\ldots,a_{q-1},a_{p+1},\ldots,a_m)$ is a cycle.
By induction $(a_1,\ldots,a_{q-1},a_{p+1},\ldots,a_m)$
is a power of a loop and by Lemma~\ref{lem:reduction1} $(a_q,\ldots,a_p)$
is also a loop.
Thus $c$ is a composition of loops, which necessarily all start (and end)
at the same vertex.
Now Lemma~\ref{lem:reduction2} implies that $c$ is a power of a
loop.
\end{proof}


\section{Locally free modules}


Assume that $A = KQ/\cI$ is geometrically irreducible.
By Lemma~\ref{lem:reduction3},
$A_i := e_iAe_i$ is a truncated polynomial ring for each vertex $i \in Q_0$.
An $A$-module $M$ is \emph{locally free} if
$e_iM$ is a free $A_i$-module for each $i$.
In this case, let $\rk(M) := (\rk(e_iM))_{i \in Q_0}$ be the \emph{rank vector} of $M$, where $\rk(e_iM)$ denotes the rank of the free $A_i$-module $e_iM$.

For an $A$-module $M$ let
$$
\supp(M) := \{ 1 \le i \le n \mid e_iM \not= 0 \}
$$
be the \emph{support} of $M$.

\begin{Lem}\label{lem:rigidlocfree1}
Assume that $A = KQ/\cI$ is geometrically irreducible.
Then every rigid $A$-module is locally free.
\end{Lem}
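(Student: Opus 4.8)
The plan is to show that if $M$ is a rigid $A$-module which is \emph{not} locally free, then $\rep(A,\bd)$ is reducible for $\bd = \dimv(M)$, contradicting geometric irreducibility. The idea is to exhibit $M$ as a point in the closure of two different irreducible subsets that cannot lie in a common irreducible component. Since $M$ is rigid, its orbit $\cO_M$ is open in $\rep(A,\bd)$ by Voigt's Lemma, so the closure $\ol{\cO_M}$ is an irreducible component; the contradiction will come from producing a \emph{second} irreducible component, forcing $\rep(A,\bd)$ to be reducible.

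The source of the second component is the failure of local freeness at some vertex $i$. Recall $A_i := e_iAe_i \cong K[X]/(X^{m_i})$ by Lemma~\ref{lem:reduction3}. If $e_iM$ is not free over $A_i$, then as an $A_i$-module $e_iM \cong \bigoplus_j K[X]/(X^{r_j})$ with not all $r_j$ equal to $m_i$. First I would use this to construct a deformation: degenerate $e_iM$ as an $A_i$-module toward a locally free $A_i$-module of the same $K$-dimension (possible exactly when $\dim_K(e_iM)$ is divisible by $m_i$) or, if the dimension is not divisible by $m_i$, toward the $A_i$-module with Jordan type as close to rectangular as possible; in either case one gets an $A_i$-module of the same dimension but \emph{different} Jordan type, hence not isomorphic to $e_iM$. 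The key point is that the loop $\vep_i$ at $i$ is the only arrow from $i$ to $i$ (Lemma~\ref{lem:reduction2}), and the relations in $\cI$ involving $\vep_i$ alone are generated by $\vep_i^{m_i}$, so any choice of nilpotent matrix $M'(\vep_i)$ of order $\le m_i$, together with the remaining $M(a)$ unchanged — \emph{provided} the mixed relations of $\cI$ are still satisfied — yields a point of $\rep(A,\bd)$. The main obstacle will be precisely this: the mixed relations $\rho$ involving both $\vep_i$ and other arrows need not survive an arbitrary change of $M(\vep_i)$. To handle this I would instead deform within a slice: change $M(\vep_i)$ only on the part of $e_iM$ that is a direct summand as an $A$-module, or use that near $M$ the scheme $\rep(A,\bd)$ is smoothly equivalent to a neighborhood in the variety parametrizing the $A_i$-module structure on $e_iM$ with the other data fixed — this is where one exploits rigidity, since $\Ext^1_A(M,M)=0$ controls the local structure and lets one reduce to the local (one-vertex) computation, where Corollary~\ref{cor:reduction2} already tells us the answer.

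Concretely, the cleanest route I would try: pick $i$ with $e_iM$ not free, and consider the $A$-submodule (or quotient) structure at $i$. As in the proof of Lemma~\ref{lem:reduction2}, compare two open sets in $\rep(A,\bd)$: the locus $\cS$ where $M(\vep_i)$ restricted to a suitable subspace has maximal possible rank (Jordan type pushed toward rectangular), and the complementary-flavoured locus $\cT$. Both are nonempty — $\cS$ because rectangular Jordan types exist in the right $K$-dimension once we are allowed to also perturb the incident arrow maps consistently with $\cI$, and $\cT$ because $M$ itself lies in it — and both are open and $G(\bd)$-stable. If $\rep(A,\bd)$ were irreducible these two open sets would meet, giving a single module whose restriction $e_iM'$ simultaneously has two incompatible Jordan types, which is absurd. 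The remaining work is to verify that $\cS$ is genuinely nonempty, i.e. that one can realize a locally free (at $i$) module of dimension vector $\bd$ at all; here I would invoke that $A_i$ is a truncated polynomial ring so free $A_i$-modules of every rank exist, build the representation vertex-by-vertex using that each mixed relation $\rho$ can be solved by taking the relevant arrow maps to be zero (a relation has length $\ge 2$, so the zero representation of the non-loop arrows kills it), and then observe that geometric irreducibility applied to this constructed module together with $M$ — two modules of the same dimension vector, one locally free and one not — already yields the contradiction via the openness of both $\cO_M$ (rigid) and the locally-free locus. The hard part, to emphasize, is controlling the mixed relations during the degeneration; everything else is the same open-dense-sets argument used in Lemmas~\ref{lem:reduction1} and~\ref{lem:reduction2}.
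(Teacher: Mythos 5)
Your overall strategy (rigid $\Rightarrow$ open orbit $\Rightarrow$ its closure is an irreducible component; then exhibit a second component coming from a dominance-larger Jordan type of the loop at a vertex where local freeness fails) is the right one, and your observation that setting all non-loop arrows to zero produces points of $\rep(A,\bd)$ with arbitrary prescribed nilpotent Jordan types for the loops is a legitimate way to make the competing locus nonempty. But there is a genuine gap exactly at the point you flag yourself: you work at $\bd = \dimv(M)$, and your contradiction needs a point of $\rep(A,\bd)$ whose Jordan type at the bad vertex $i$ \emph{strictly} dominates that of $e_iM$. Writing $A_i \cong K[\vep_i]/(\vep_i^{c_i})$, if $c_i \nmid d_i$ then no locally free module of dimension vector $\bd$ exists at all, and your fallback (``as close to rectangular as possible'') collapses precisely when $e_iM$ already has the dominance-maximal partition $(c_i,\ldots,c_i,r)$ with $0<r<c_i$: such a module is \emph{not} locally free, yet nothing in $\rep(A,\bd)$ dominates it at vertex $i$, so your open set $\cS$ contains $M$ and no contradiction arises. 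Your closing sentence (``two modules of the same dimension vector, one locally free and one not'') silently assumes the divisible case.

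The paper closes this gap with one move you are missing: replace the rigid module $R$ by $R^m$ for a suitable $m \ge 1$ (for instance a common multiple of the $c_i$). The module $R^m$ is still rigid, so its orbit is still open and its closure is still an irreducible component of $\rep(A,\dimv(R^m))$; now every entry of $\dimv(R^m)$ is divisible by the relevant $c_i$, so a locally free module $M$ with $\dimv(M)=\dimv(R^m)$ does exist (a direct sum of the uniserial modules $e_iAe_i$ --- in effect your ``all non-loop arrows zero'' construction), and ``$e_iR$ not free'' forces the Jordan type of $e_iR^m$ to be strictly below the rectangular one in the dominance order. Semicontinuity of the ranks $\rk(\vep_i^j)$ then places $M$ outside $\overline{\cO_{R^m}}$, producing the second component. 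With this single adjustment your argument becomes essentially the paper's proof; the alternative routes you sketch (deforming $M(\vep_i)$ while controlling the mixed relations, or using $\Ext_A^1(M,M)=0$ to reduce to a one-vertex slice) are not needed and are not developed far enough to stand on their own.
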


\begin{proof}
Let $R$ be a rigid $A$-module.
We claim that $R$ is locally free.
Assume not, then there exists some $m \ge 1$ and a locally free
$A$-module $M$ such that $\bd := \dimv(M) = \dimv(R^m)$.
Now the orbit of $R^m$ is open, so its closure is an irreducible
component of $\rep(A,\bd)$.
By upper semicontinuity, $M$ is not contained in this orbit closure.
Thus $\rep(A,\bd)$ is not irreducible, a contradiction.
\end{proof}

A finite-dimensional algebra $A$ is an \emph{Iwanaga--Gorenstein algebra} if $\pdim(D(A_A)) < \infty$ and
$\idim({_A}A) < \infty$.
In this case, \cite[Lemma~6.9]{AR} and \cite[Theorem~5]{Iw} imply that
$m := \pdim(D(A_A)) = \idim({_A}A)$, and that for
$M \in \md(A)$ the following are equivalent:
\begin{itemize}

\item[(i)]
$\pdim(M) < \infty$;

\item[(ii)]
$\idim(M) < \infty$;

\item[(iii)]
$\pdim(M) \le m$;

\item[(iv)]
$\idim(M) \le m$.

\end{itemize}

\begin{Prop}\label{prop:Gorenstein}
Assume that $A = KQ/\cI$ is geometrically irreducible.
Then $A$ is an Iwanaga--Gorenstein algebra.
Furthermore, for $M \in \md(A)$ the following are
equivalent:
\begin{itemize}

\item[(i)]
$\pdim(M) < \infty$;

\item[(ii)]
$\idim(M) < \infty$;

\item[(iii)]
$M$ is locally free.

\end{itemize}
\end{Prop}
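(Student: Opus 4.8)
The plan is to establish, first, that $A$ is Iwanaga--Gorenstein, and then to match up the class of modules of finite projective (equivalently injective) dimension with the class of locally free modules. The starting observation is that we already know a lot about the structure of $A$: by Lemma~\ref{lem:reduction3}, $A_i := e_iAe_i \cong K[X]/(X^{m_i})$ for each vertex $i$, so the indecomposable projective $P_i := Ae_i$ and the indecomposable injective $I_i := D(e_iA)$ are both locally free when restricted to each vertex subalgebra; in fact any projective or injective $A$-module is locally free (for $P_i$ this is because $e_jP_i = e_jAe_i$ is a free $A_j$-module, using that arrows between distinct vertices do not create torsion over $A_j$, and a dual argument handles $I_i$). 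This already suggests that ``finite projective dimension'' should be forced by ``locally free'', at least once we know $A$ is Gorenstein.

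For the Iwanaga--Gorenstein claim I would argue that $\idim({_A}A) < \infty$ and $\pdim(D(A_A)) < \infty$ by a rigidity/irreducibility argument in the spirit of Lemma~\ref{lem:reduction2}. The key point is that over a truncated polynomial ring $K[X]/(X^m)$, the regular module is injective, so locally free $A$-modules are ``locally injective''. One then wants to show that for a locally free module $M$, a minimal injective coresolution stays locally free and terminates; more precisely I expect that the cosyzygy $\Omega^{-1}(M)$ of a locally free module is again locally free, so that the injective dimension of any locally free module is controlled, and in particular $\idim({_A}A)$ is finite since ${_A}A$ is locally free. The dual statement gives $\pdim(D(A_A)) < \infty$. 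At this stage \cite[Lemma~6.9]{AR} and \cite[Theorem~5]{Iw}, quoted in the excerpt, apply and give the common bound $m = \pdim(D(A_A)) = \idim({_A}A)$ together with the equivalence of (i), (ii), (iii), (iv) in the paragraph preceding the proposition.

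It then remains to prove the equivalence with condition (iii), local freeness. The implication ``locally free $\Rightarrow$ finite projective dimension'' should follow from the cosyzygy argument above together with the Gorenstein symmetry: a locally free module has finite injective dimension, hence finite projective dimension. For the converse, ``finite projective dimension $\Rightarrow$ locally free'', I would take a finite projective resolution $0 \to P^{r} \to \cdots \to P^0 \to M \to 0$, apply the exact functor $e_i(-)$ to get a finite resolution of $e_iM$ by free $A_i$-modules, and conclude that $e_iM$ has finite projective dimension over $A_i \cong K[X]/(X^{m_i})$; since $K[X]/(X^{m_i})$ is selfinjective of infinite global dimension (for $m_i \ge 2$), a module over it has finite projective dimension only if it is free, so $e_iM$ is free for every $i$, i.e.\ $M$ is locally free. (When $m_i = 1$ every $A_i$-module is trivially free.)

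The main obstacle I anticipate is the cosyzygy step: showing that $\Omega^{-1}(M)$ is locally free whenever $M$ is, and hence that locally free modules have bounded injective dimension. This requires a genuine argument with the structure of $Q$ and $\cI$ — one has to see that the cokernel of the injective envelope $M \hookrightarrow I(M)$ remains free over each $A_i$, which uses that $I(M)$ is locally free and that the embedding is, vertex by vertex, an embedding of free $A_i$-modules with locally free cokernel. Everything else — the reduction to $A_i$ being truncated polynomial, the behaviour of $e_i(-)$, the citation of the Auslander--Reiten/Iwanaga machinery, and the projective-dimension-over-$K[X]/(X^{m_i})$ dichotomy — is then routine.
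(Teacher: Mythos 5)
Your outline reaches the right intermediate statements, and your argument for (i)$\Rightarrow$(iii) (apply the exact functor $e_i(-)$ to a finite projective resolution and use that over the selfinjective local algebra $A_i\cong K[X]/(X^{m_i})$ finite projective dimension forces freeness) is essentially the paper's, phrased a bit more directly. But the central step --- that locally free modules have finite projective and, dually, finite injective dimension, which is exactly what makes ${}_AA$ and $D(A_A)$ witnesses for the Iwanaga--Gorenstein property --- is not proved: you explicitly flag the (co)syzygy step as ``the main obstacle I anticipate,'' and even granting that $\Omega(M)$ or $\Omega^{-1}(M)$ is again locally free, local freeness alone gives no reason for the resolution to terminate. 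The paper closes this with a genuine induction: since all cycles of $Q$ are loops (Lemma~\ref{lem:reduction3}), the loop-free quiver $Q^\circ$ is acyclic, so one may order the vertices with every arrow $a$ of $Q^\circ$ satisfying $s(a)>t(a)$; a locally free $M$ then has a filtration by the uniserial modules $E_i=e_iAe_i$, and if $j$ is maximal with $e_jM\neq 0$, the syzygy of $M$ with respect to $\bigoplus_{i\le j}P(i)^{[M:E_i]}$ is again locally free (locally free modules are closed under kernels of epimorphisms) and satisfies $[\Omega(M):E_i]=0$ for all $i\ge j$. This strict drop in support is the decreasing invariant that makes the induction, and hence the finiteness of $\pdim$, go through; your proposal contains no substitute for it.

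A secondary but real flaw: you justify local freeness of projectives by asserting that $e_jAe_i$ is a free $A_j$-module ``because arrows between distinct vertices do not create torsion.'' That is false as a general structural claim (for instance a relation $\vep_0\alpha$ with $\alpha\colon 1\to 0$ makes $e_0Ae_1$ a non-free $A_0$-module); local freeness of projectives and injectives is not automatic but is Lemma~\ref{lem:rigidlocfree1}, i.e.\ it is precisely where geometric irreducibility (via rigidity and openness of orbits) enters this proof. You should cite that lemma rather than argue the fact away.
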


\begin{proof}
We follow the lines of the proof of
\cite[Proposition~3.5]{GLS}.
By Lemma~\ref{lem:rigidlocfree1},
all projective and also all injective $A$-modules are
locally free.
We know from Lemma~\ref{lem:reduction1} that all cycles in $Q$ are loops.
Let $Q^\circ$ be the quiver obtained from $Q$ by removing all
loops.
Assume without loss of generality
that $Q_0 = \{ 1,\ldots, n \}$ and that for each
arrow $a$ of $Q^\circ$ we have $s(a) > t(a)$.

For each $1 \le i \le n$, the algebra $e_iAe_i$ is isomorphic a
truncated polynomial ring $K[\vep_i]/(\vep_i^{c_i})$ with $c_i \ge 1$.
Now $E_i := e_iAe_i$ is a uniserial $A$-module which is
free of rank $1$ as an $e_iAe_i$-module.

Since $Q^\circ$ does not have oriented cycles, $M \in \md(A)$ is
locally free if and only if there is a chain
$$
0 = M_0 \subset M_1 \subset \cdots \subset M_t = M
$$
of submodules such that for each $1 \le i \le t$ we have
$M_i/M_{i-1} \cong E_j$ for some $j$.
Let
$$
[M:E_j] := |\{ 1 \le i \le t \mid M_i/M_{i-1} \cong E_j \}| = \rk(e_jM).
$$

Let $P_1,\ldots,P_n$ be the indecomposable projective $A$-modules,
and let $I_1,\ldots,I_n$ be the indecomposable injective
$A$-modules corresponding to the vertices of $Q$.
Clearly, we have $E_1 \cong P_1$ and $E_n \cong I_n$.

Let $j$ be maximal such that $e_jM \not= 0$.
There is a short exact sequence
$$
0 \to \Omega(M) \to \bigoplus_{i=1}^j P(i)^{[M:E_i]} \to M \to 0.
$$
The subcategory of locally free modules is clearly
closed under kernels of epimorphisms and cokernels of
monomorphisms.

It follows that $\Omega(M)$ is locally free and that
$[\Omega(M):E_i] = 0$ for all $i \ge j$.
By induction, $\pdim(\Omega(M)) < \infty$.
This implies $\pdim(M) < \infty$.

Dually, one shows that $\idim(M) < \infty$ for all
locally free $M \in \md(A)$.

Since ${_A}A$ and $D(A_A)$ are locally free, we get that $A$ is
an Iwanaga--Gorenstein algebra.
In particular, this implies that (i) and (ii) are equivalent for all $M \in \md(A)$.

Finally,  assume that $M \in \md(A)$ is not locally free.
Thus there exists some $1 \le i \le n$ such that $e_iM$ is
not a free $e_iAe_i$-module.
Let
$$
0 \to \Omega(M) \to P \to M \to 0
$$
be a short exact sequence with $P$ projective.
It follows that
$$
0 \to e_i\Omega(M) \to e_iP \to e_iM \to 0
$$
is a short exact sequence of $e_iAe_i$-modules.
Now $e_iP$ is free as an $e_iAe_i$-module, but $e_iM$
is not.
This implies that also $e_i\Omega(M)$ is not free.
By induction this implies that $\pdim(M) = \infty$.
This finishes the proof.
\end{proof}


\section{Linear quivers}\label{sec:linear1}


\subsection{Linear quivers}\label{subsec:linear1}
Throughout Section~\ref{sec:linear1}, let
$Q = Q(n,t_1,\ldots,t_n)$ be the quiver
\[
\xymatrix{
0 \ar@(ul,ur)^{\vep_0}
&
1 \ar@(ul,ur)^{\vep_1} \ar@<-1ex>[l]_{\alpha_{11}} \ar@<1ex>^{\alpha_{t_11}}_{\cdots}[l]
&
2 \ar@(ul,ur)^{\vep_2} \ar@<-1ex>[l]_{\alpha_{12}} \ar@<1ex>^{\alpha_{t_22}}_{\cdots}[l]
&
\cdots \ar@<-1ex>[l]_{\alpha_{13}} \ar@<1ex>^{\alpha_{t3}}_{\cdots}[l]
&
n \ar@(ul,ur)^{\vep_n} \ar@<-1ex>[l]_{\alpha_{1n}} \ar@<1ex>^{\alpha_{t_nn}}_{\cdots}[l]
},
\]
where $n \geq 1$ and $t_1, \ldots, t_n \geq 1$.

Let $A = KQ/\cI$ with $\cI$ an admissible ideal in $KQ$.
We assume that $A$ is geometrically irreducible.

\subsection{Definition of a grading}
Define $\deg(\vep_i) := 0$ and $\deg(\alpha_{ij}) := 1$.
This definition extends to all paths and relations in the obvious way.
This turns $A$ into an $\N$-graded algebra.
If $R$ is a minimal set of relations generating $\cI$, then $R$ decomposes
as
$$
R = R_0 \cup R_1 \cup \cdots \cup R_n,
$$
where for $i \ge 1$, $R_i$ is the set of relations of degree $i$ in $R$.
In particular, we have
$R_0 = \{ \vep_0^{c_0},\ldots,\vep_n^{c_n} \}$ for some $c_i \ge 2$.

Define
$$
A_i := e_iAe_i = K[\vep_i]/(\vep_i^{c_i}).
$$

\subsection{Stratification by partitions}\label{subsec:strat}
Let $A = KQ/\cI$ be defined as before, and assume
that $\cI$ is generated by relations of degree one.
Let
$$
\pi\df \rep(A,\bd) \to \prod_{i=0}^n \rep(A_i,d_i)
$$
be the obvious projection.

\begin{Lem}\label{lem:strat}
For $(M_1,\ldots,M_n) \in \prod_{i=1}^n \rep(A_i,d_i)$ the preimage
$$
\pi^{-1}(M_1,\ldots,M_n)
$$
is an affine space whose dimension does not depend on the choice
of $(M_1,\ldots,M_n)$.
\end{Lem}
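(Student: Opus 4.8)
The plan is to prove the two assertions---that $\pi^{-1}(\bM)$ is an affine space, and that its dimension is independent of $\bM=(M_0,\dots,M_n)$---one after the other; the first is a direct linear-algebra observation, the second combines semicontinuity, $G(\bd)$-equivariance, and the standing hypothesis that $A$ is geometrically irreducible.

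For the first assertion, fix $\bM$, that is, fix nilpotent endomorphisms $M_i=M(\vep_i)$ with $M_i^{c_i}=0$. Since $\cI$ is generated by the degree-zero relations $\vep_i^{c_i}$ (which are exactly the conditions already built into $Y:=\prod_i\rep(A_i,d_i)$) together with the degree-one relations, the fibre $\pi^{-1}(\bM)$ is the locus inside the affine space $E:=\prod_{a\in Q_1}\Hom_K(K^{d_{s(a)}},K^{d_{t(a)}})$ on which every degree-one relation vanishes. A degree-one path in $Q$ has the shape $\vep_{j-1}^{a}\alpha_{\ell j}\vep_j^{b}$, so for a degree-one relation $\rho$ the value $M(\rho)$ at a point of $E$ is a $K$-linear homogeneous expression in the arrow-matrices $(M(a))_{a\in Q_1}$, with coefficients assembled from the fixed matrices $M_i$. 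Hence $\pi^{-1}(\bM)$ is the kernel of a linear map on $E$, that is, a linear subspace of $E$, and in particular an affine space.

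For the dimension, set $f(\bM):=\dim\pi^{-1}(\bM)$ for $\bM\in Y$. The matrix of the linear system cutting out $\pi^{-1}(\bM)$ has entries polynomial in $\bM$, so its rank is lower semicontinuous and $f$ is upper semicontinuous on $Y$. Moreover $\pi$ is equivariant for the action of $G(\bd)$ on $Y$ by simultaneous conjugation and on $E$ in the usual way, and conjugation identifies $\pi^{-1}(\bM)$ with $\pi^{-1}(g\cdot\bM)$, so $f$ is constant along $G(\bd)$-orbits. As $Y$ is a product of nilpotent cones, $G(\bd)$ acts on $Y$ with finitely many orbits (indexed by the Jordan types of the $M_i$, all parts $\le c_i$), and $Y$ is irreducible with a dense open orbit $O_{\mathrm{reg}}$; write $D:=f|_{O_{\mathrm{reg}}}$, so $f\ge D$ everywhere. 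It remains to show $f\equiv D$. Since $A$ is geometrically irreducible, $\rep(A,\bd)$ is irreducible, and $\pi$ is surjective because $\bM\mapsto(\bM,0)$ is a section; hence $\dim\rep(A,\bd)=\dim Y+D$. For an orbit $O_\nu$ the morphism $\pi^{-1}(O_\nu)\to O_\nu$ is a fibre bundle over the homogeneous space $O_\nu$ with fibre the affine space $\A^{f|_{O_\nu}}$, so $\pi^{-1}(O_\nu)$ is irreducible of dimension $\dim O_\nu+f|_{O_\nu}$; for $O_\nu\neq O_{\mathrm{reg}}$ it is locally closed in $\rep(A,\bd)$ and disjoint from the dense open $\pi^{-1}(O_{\mathrm{reg}})$, so its closure is a proper closed subvariety, whence $\dim O_\nu+f|_{O_\nu}\le\dim\rep(A,\bd)-1=\dim Y+D-1$, i.e.\ $f|_{O_\nu}\le D+\operatorname{codim}_Y(O_\nu)-1$.

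The step I expect to be the real obstacle is upgrading this inequality to the equality $f|_{O_\nu}=D$ on every orbit. The plan is a downward induction along the closure order on the orbits of $Y$: assuming $f\equiv D$ on all orbits strictly larger than $O_\nu$, one restricts $\pi$ over the closure $\overline{O_\nu}$, passes where useful to the algebras $A_E=A/Ae_EA$---which are again geometrically irreducible---and to correspondingly smaller dimension vectors, and then uses irreducibility of these restricted total spaces, rather than of $\rep(A,\bd)$ alone, to push the fibre dimension over $O_\nu$ down to $D$. Setting up this induction correctly, and in particular checking irreducibility of the intermediate spaces, is where the main work lies.
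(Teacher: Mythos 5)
Your treatment of the first assertion is correct and coincides with the paper's entire proof: once the tuple $(M_0,\ldots,M_n)$ is fixed, each degree-one relation imposes a condition that is linear in the matrices $M(\alpha_{\ell j})$, so the fibre is cut out by linear equations in those coordinates and is therefore a linear subspace of the corresponding affine space. The paper's proof says nothing beyond this.

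For the second assertion your proposal is not a proof, and you say so yourself: after semicontinuity, orbit-constancy, the identity $\dim\rep(A,\bd)=\dim Y+D$ and the bound $f|_{\cO_\nu}\le D+\mathrm{codim}(\cO_\nu)-1$, the decisive step --- forcing $f|_{\cO_\nu}=D$ on every orbit --- is deferred to an unexecuted induction ``where the main work lies''. This is a genuine gap, and it cannot be closed, because global constancy of the fibre dimension is false under the stated hypotheses. Take $A=B(2,2)=KQ/(\vep_0^2,\vep_1^2,\vep_0\alpha+\alpha\vep_1)$, which is of the shape considered in Section~\ref{sec:linear1} with $\cI$ generated by relations of degree at most one, and which the paper asserts to be geometrically irreducible (Theorem~\ref{thm:twovertices}(ii) and \cite{BS2}); take $\bd=(2,2)$. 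The fibre over $(M_0,M_1)$ is $\{X\in M_2(K)\mid M_0X+XM_1=0\}$, which has dimension $4$ over $(0,0)$ and dimension $2$ over the dense orbit of pairs of nonzero square-zero matrices, so the dimension genuinely varies with the Jordan types of the $M_i$, i.e.\ with the orbit. The correct and sufficient statement, which follows at once from your linearity and equivariance observations, is that the fibre dimension is constant along each $G(\bd)$-orbit $\cO_\bp$ of the base; this already makes each stratum $\cS_\bp$ irreducible of dimension $\dim\cO_\bp+f|_{\cO_\bp}$ and, since $\cO_\bd$ is open and dense in $\prod_i\rep(A_i,d_i)$, shows that $\ov{\cS(\cO_\bd)}$ is an irreducible component --- which is all that Section~\ref{subsec:strat} uses. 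Your own finding that $f\ge D$ everywhere with possible jumps on smaller orbits is exactly the behaviour the example exhibits, and should have been read as a warning sign rather than as the starting point for an induction aimed at ruling such jumps out.
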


\begin{proof}
The representations $(M_1,\ldots,M_n,M_{\alpha_{ij}})$ of
$A$ have to satisfy relations of degree one.
When we fix $(M_1,\ldots,M_n)$ this amounts to solving systems
of linear equations given by the relations.
The result follows.
\end{proof}

As a consequence we get a stratification
$$
\rep(A,\bd) = \bigcup_{\cO} \cS(\cO)
$$
where the disjoint union is taken over all $G_\bd$-orbits $\cO$ in
$\prod_{i=0}^n \rep(A_i,d_i)$ and
$$
\cS(\cO) := \bigcup_{x \in \cO} \pi^{-1}(x).
$$
In the obvious way,
the orbits $\cO$ are parametrized by tuples
$\bp = (\bp_0,\ldots,\bp_n)$
of partitions $\bp_i$ of $d_i$ with entries at most $c_i$.
Let $\cO_\bp = \cO_{\bp_0,\ldots,\bp_n}$ denote the
orbit corresponding to $\bp$, and let $\cS_\bp = \cS_{\bp_0,\ldots,\bp_n} := \cS(\cO_\bp)$.
Let $\cO_\bd$ be the unique orbit of maximal dimension in $\prod_{i=0}^n \rep(A_i,d_i)$.
Then the closure $Z_\bd := \ov{\cS(\cO_\bd)}$ of $\cS(\cO_\bd)$ is an irreducible component of
$\rep(A,\bd)$.

\subsection{Relations of degree one and tensor algebras}
If $P$ is a projective $A$-module, then $P$ is locally free by
Lemma~\ref{lem:rigidlocfree1}.
It follows that
for each $1 \le i \le n$, the bimodule $B_i := e_{i-1}Ae_i$
is free both as a left $A_{i-1}$-module and as a right $A_i$-module.

We first investigate a structure of the algebra $A' := KQ/\cI'$,
where $\cI'$ is the ideal generated by the relations $\rho \in \cI$
of degree at most one.
Let
$$
S := A_0 \times \cdots \times A_n
\text{\;\;\; and \;\;\;}
B := B_1 \times \cdots \times B_n.
$$
Obviously, $B$ is an $S$-$S$-bimodule.
We denote by $T_S(B)$ the corresponding tensor algebra.

\begin{Prop} \label{prop isomorphism}
We have a natural isomorphism $A' \simeq T_S(B)$.
\end{Prop}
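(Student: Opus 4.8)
The plan is to exhibit an explicit $S$-algebra homomorphism $\varphi\colon T_S(B)\to A'$ and show it is a bijection by a dimension count on each graded piece. First I would recall that $T_S(B)=\bigoplus_{m\ge 0} B^{\otimes_S m}$, where $B^{\otimes_S 0}=S$, and that the algebra $A'=KQ/\cI'$ inherits the $\N$-grading from $\deg(\vep_i)=0$, $\deg(\alpha_{ij})=1$: its degree-$m$ part $A'_m$ is spanned by images of paths using exactly $m$ of the arrows $\alpha_{ij}$. The natural map sends $S=A_0\times\cdots\times A_n$ isomorphically onto the degree-$0$ subalgebra $e_0A'e_0\times\cdots\times e_nA'e_n$ (this is exactly $R_0=\{\vep_i^{c_i}\}$ killing the right powers), and sends each bimodule $B_i=e_{i-1}Ae_i$ into the degree-$1$ part of $A'$ via the inclusion $e_{i-1}Ae_i\hookrightarrow A\twoheadrightarrow A'$. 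Since $T_S(B)$ is the free $S$-algebra on the bimodule $B$ and $A'$ is generated over its degree-$0$ part by the degree-$1$ elements (the $\alpha_{ij}$), the universal property of the tensor algebra gives a surjective graded $S$-algebra homomorphism $\varphi\colon T_S(B)\to A'$.

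Next I would prove injectivity degree by degree. The key input is that $\cI'$ is generated by relations of degree at most one, so $\cI'=(\cI'_0)+(\cI'_1)$ where $(\cI'_0)$ is the two-sided ideal generated by the $\vep_i^{c_i}$ and $(\cI'_1)$ the one generated by the degree-one relations $R_1$. Modding out by $(\cI'_0)$ alone already produces $T_S(\wti B)$ where $\wti B_i=e_{i-1}(KQ/(\vep_\bullet^{c_\bullet}))e_i$; and modding out further by the degree-one relations $R_1$ exactly cuts $\wti B_i$ down to $B_i=e_{i-1}Ae_i$ without introducing any new relations in higher degrees, because a product of degree-one generators can only land in $\cI'$ if one of its length-two "consecutive" sub-factors already does (the degree-one relations $\rho\in R_1$ involve paths $\alpha\vep^k\alpha'$ or $\alpha\alpha'$ of path-length two, i.e. they are \emph{local} to two consecutive arrows). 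Thus $\varphi$ restricted to $B^{\otimes_S m}=B_{i_1}\otimes_{A_{i_1}}\cdots$ identifies the degree-$m$ part of $T_S(B)$ with the span in $A'$ of products $b_1\cdots b_m$ with $b_k\in B_{i_k}$, which is precisely $A'_m$; no relations are lost, so $\varphi$ is an isomorphism.

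The main obstacle, and the step I would write out most carefully, is the claim that the degree-one relations in $\cI'$ generate \emph{no} extra relations in degrees $\ge 2$ among products of the $B_i$'s — equivalently, that $\bigl(B_1\otimes_{A_1}\cdots\otimes_{A_{m-1}}B_m\bigr)$ maps injectively into $A'$. For this I would use that each $B_i=e_{i-1}Ae_i$ is free as a left $A_{i-1}$-module and as a right $A_i$-module (this was observed just before the proposition, using Lemma~\ref{lem:rigidlocfree1}), so that the $S$-bimodule tensor powers $B^{\otimes_S m}$ have the expected dimension and are computed "locally"; combined with the locality of the generating relations $R_1$ (each $\rho\in R_1$ lies in some $B_{i}\otimes_{A_i} B_{i+1}$), the associated graded of $A'$ with respect to the path-length filtration is seen to be $T_S(B)$ on the nose. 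One should also note that there are no relations of degree one "spanning three vertices", since a degree-one relation uses exactly one of the $\alpha_{ij}$ and otherwise only loops, so its support in $Q^\circ$ is a single arrow; this rules out the only conceivable source of a non-local obstruction. With locality in hand the dimension bookkeeping is routine and the isomorphism follows.
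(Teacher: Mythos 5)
Your argument is essentially correct but takes a genuinely different, and considerably longer, route than the paper. The paper sidesteps injectivity entirely: the universal property of the path algebra gives an epimorphism $KQ \to T_S(B)$ whose kernel contains $\cI'$, hence an epimorphism $A' \to T_S(B)$; the universal property of the tensor algebra gives an epimorphism $T_S(B) \to A'$; and since both algebras are finite-dimensional, both maps must be isomorphisms. Your one-directional approach (surjection $T_S(B)\to A'$ plus degree-by-degree injectivity) can be completed, and the clean way to finish is the one you are circling around: $KQ/(R_0)\simeq T_S(\wti{B})$ where $\wti{B}_i:=e_{i-1}(KQ/(R_0))e_i$ is free on both sides, and for the sub-bimodule $N\subseteq \wti{B}$ generated by $R_1$ one has $T_S(\wti{B})/(N)\simeq T_S(\wti{B}/N)=T_S(B)$ by right-exactness of the tensor product. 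This identity is exactly where ``no new relations in degree $\ge 2$'' comes from, and it does not use freeness of the $B_i$ at all; what your approach buys is an explicit description of the graded pieces $A'_m\simeq B^{\otimes_S m}$, which the paper's dimension trick does not directly provide.

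Two points in your write-up need repair. First, you twice mischaracterize the degree-one relations --- as involving ``paths $\alpha\vep^k\alpha'$ of path-length two'' and as lying ``in some $B_i\otimes_{A_i}B_{i+1}$'' --- but such elements have degree two, not one. A degree-one relation is a linear combination of paths $\vep_{i-1}^a\alpha_{ji}\vep_i^b$ and lies in the single bimodule $\wti{B}_i$, as your own closing sentence correctly observes; only this correct version makes the locality argument function, so the two inconsistent formulations should be reconciled. Second, ``the dimension bookkeeping is routine'' conceals the one genuinely nontrivial step (the right-exactness identity above), which should be stated explicitly rather than gestured at. Given that the two-epimorphism-plus-finite-dimensionality trick is available here, the paper's proof is both shorter and immune to these pitfalls.
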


\begin{proof}
By the universal property of the path algebra, we have a natural epimorphism $K Q \to T_S(B)$, whose kernel contains $\cI'$, hence we get an epimorphism $A' \to T_S(B)$.
On the other hand, by the universal property of $T_S(B)$ we have an epimorphism $T_S (B) \to A'$.
Since both algebras are finite-dimensional, the claim follows.
\end{proof}

For each $i$ let $r_i$ be the rank of $B_i$ as a left $A_{i-1}$-module.
We have the following formulas for the rank vectors of projective
$A'$-modules.

\begin{Prop} \label{prop ranks}
If $i$ is a vertex of $Q$, then
\[
\rk(A'e_i) = (r_1 \cdots r_i, r_2 \cdots r_i, \ldots, r_i, 1, 0, \ldots, 0).
\]
\end{Prop}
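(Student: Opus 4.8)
The plan is to compute the rank vector of $A'e_i$ directly from the tensor algebra description $A' \simeq T_S(B)$ provided by Proposition~\ref{prop isomorphism}. Recall that the tensor algebra decomposes as $T_S(B) = \bigoplus_{\ell \ge 0} B^{\otimes_S \ell}$, so that as a left $A'$-module we have $A'e_i = \bigoplus_{\ell \ge 0} (B^{\otimes_S \ell}) e_i$. First I would unwind what $(B^{\otimes_S \ell})e_i$ is: since $B = B_1 \times \cdots \times B_n$ with $B_j = e_{j-1}Ae_j$ an $A_{j-1}$-$A_j$-bimodule supported in ``consecutive'' degrees, the only nonzero tensor products are $B_{j} \otimes_{A_{j-1}} B_{j-1} \otimes \cdots$, i.e. telescoping chains of consecutive arrows. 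Concretely, $(B^{\otimes_S \ell})e_i = B_{i-\ell+1} \otimes_{A_{i-\ell}} B_{i-\ell+2} \otimes_{A_{i-\ell+1}} \cdots \otimes_{A_{i-1}} B_i$ when $\ell \le i$, and is zero when $\ell > i$ (there is no room to the left of vertex $0$).

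Next I would use freeness. By the remark preceding Proposition~\ref{prop ranks} (which follows from Lemma~\ref{lem:rigidlocfree1} applied to the projective modules of $A$, and hence of $A'$), each $B_j = e_{j-1}Ae_j$ is free as a left $A_{j-1}$-module of rank $r_j$, and also free as a right $A_j$-module. The key point is that a tensor product over $A_{j-1}$ of a left $A_{j-1}$-module that is free (coming from $B_j$ viewed as a right $A_{j-1}$-module? — no: here $B_{j-1}$ is a right $A_{j-1}$-module and $B_j$ is a left $A_{j-1}$-module) behaves well: if $N$ is a free left $A_{j-1}$-module of rank $r_j$, then $L \otimes_{A_{j-1}} N$ is a free left-$A_{?}$... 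The cleaner bookkeeping is to track, for each target vertex $k \le i$, the rank of $e_k \cdot (B^{\otimes_S \ell})e_i$ as a free $A_k$-module. By an induction on $\ell$ (equivalently on the length of the telescoping chain), each factor $B_j$ contributes a multiplicative factor of $r_j$ to the rank, because tensoring a free right $A_j$-module with the free-left-$A_{j-1}$-module $B_j$ multiplies the $A_{j-1}$-rank by $r_j$. Carrying this out, $(B^{\otimes_S \ell})e_i$ is concentrated at vertex $k = i - \ell$ and is free over $A_{i-\ell}$ of rank $r_{i-\ell+1} r_{i-\ell+2} \cdots r_i$ (an empty product, equal to $1$, when $\ell = 0$).

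Finally I would assemble: summing over $\ell = 0, 1, \ldots, i$, the component of $A'e_i$ at vertex $k$ is free over $A_k$ of rank $r_{k+1} r_{k+2} \cdots r_i$ for $0 \le k \le i$ (with the convention that the product is $1$ when $k = i$), and is zero for $k > i$. Reading this off in coordinates $(e_0 A'e_i, e_1 A'e_i, \ldots, e_n A'e_i)$ gives exactly
\[
\rk(A'e_i) = (r_1 \cdots r_i,\ r_2 \cdots r_i,\ \ldots,\ r_i,\ 1,\ 0,\ \ldots,\ 0),
\]
as claimed. The main obstacle I anticipate is the bimodule bookkeeping in the inductive step: one must check carefully that $B_j \otimes_{A_{j-1}} (-)$ sends a free right $A_{j-1}$-module of $A_{j-1}$-rank $s$ to a module that is free as a right $A_{j-2}$-module (via the next tensor factor) — the point being that $B_j$ is simultaneously free on both sides, so the two module structures on a tensor chain never interfere, and each arrow-bundle $B_j$ contributes exactly its left rank $r_j$. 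Once this compatibility of the two one-sided freeness properties is pinned down, the rest is a routine telescoping product.
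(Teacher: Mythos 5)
Your proof is correct and follows essentially the same route as the paper's: both rest on the identification $A' \simeq T_S(B)$ from Proposition~\ref{prop isomorphism} and then compute ranks by peeling off tensor factors, using that each $B_j$ is free as a left $A_{j-1}$-module of rank $r_j$ so that tensoring multiplies ranks. The paper phrases this as a one-line induction ($e_0A'e_i \simeq B_1 \otimes_{A_1} e_1A'e_i$) rather than writing out the full graded decomposition $\bigoplus_\ell (B^{\otimes_S \ell})e_i$, but the content is identical; note only the small typo in your tensor subscripts (the chain should read $B_{i-\ell+1} \otimes_{A_{i-\ell+1}} B_{i-\ell+2} \otimes \cdots$), and that right-freeness of the $B_j$ is not actually needed for the rank count.
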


\begin{proof}
By induction, it is sufficient to prove that $e_0 A' e_i$ has rank
$r_1 \cdots r_i$ over $A_0$.
Proposition~\ref{prop isomorphism} and induction implies that $e_0A'e_i \simeq B_1 \otimes_{A_1} e_1A'e_i$.
By induction $e_1A'e_i$ has rank $r_2 \cdots r_i$ over $A_1$.
By definition, the rank of $B_1$ over $A_0$ equals $r_1$.
Now the claim follows.
\end{proof}

\subsection{No relations of degree bigger than one}
We use the above to prove the following.

\begin{Prop}\label{prop:nobigrelations}
If $A$ is geometrically irreducible, then $R_2 = \cdots = R_n = \varnothing$.
\end{Prop}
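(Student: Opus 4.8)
The strategy is to suppose, for contradiction, that $A$ is geometrically irreducible but that some $R_j$ with $j \ge 2$ is nonempty, and to use the grading together with the tensor-algebra description of the degree-$\le 1$ part to produce a dimension vector $\bd$ for which $\rep(A,\bd)$ is reducible. The key observation is that the component $Z_\bd = \ov{\cS(\cO_\bd)}$ from Section~\ref{subsec:strat} is built from the maximal-dimension stratum of the semisimple-at-loops data, and that this component ``sees'' the algebra $A'$ (where only degree-$\le 1$ relations are imposed) rather than $A$ itself; on the other hand, honest representations of $A$ must also kill the higher-degree relations, and we will exhibit a second irreducible piece where these vanish only because the relevant arrow maps are generically zero. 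Since a degree-$j$ relation $\rho \in R_j$ with $j \ge 2$ involves a path $p$ of length $j$ that is not forced to vanish by the degree-$\le 1$ relations, the representation variety of $A'$ at a suitable $\bd$ is strictly larger than that of $A$, and the component coming from generic $\cO_\bd$-fibers cannot contain a second family of modules on which $M(p) = 0$ holds identically.

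Concretely, here is the order of steps I would carry out. First, fix a minimal relation $\rho \in R_j$ with $j \ge 2$ minimal, say $\rho$ runs from vertex $b$ to vertex $a$ with $a < b$ (using the orientation convention $s(\alpha) > t(\alpha)$ of Section~\ref{subsec:linear1}), and let $p$ be a path of length $j$ occurring in $\rho$. Second, choose the dimension vector $\bd$ supported on the vertices between $a$ and $b$ with all $d_i$ equal to $c_i$ (so each $A_i$-module of dimension $d_i$ that is free of rank one is forced — i.e. the partition $\bp_i = (c_i)$ is the unique one, making $\cO_\bd$ correspond to each $e_iM$ being \emph{free of rank one}); this forces, via Lemma~\ref{lem:strat} and the stratification, that the generic module in $Z_\bd$ is locally free of rank vector $(1,\ldots,1)$ on $\supp$. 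Third, use Proposition~\ref{prop ranks} and Proposition~\ref{prop isomorphism} to compute $\rk(A'e_i)$ and conclude that over $A'$ there is a locally free module with this rank vector in which the composite map along $p$ is \emph{nonzero} — more precisely, a projective $A'$-module (or a locally free quotient of one) whose restriction to $Q_\rho$ realizes $M(p) \ne 0$, which is therefore \emph{not} an $A$-module. Fourth, observe that on the open stratum $\cS(\cO_\bd)$ the condition $M(\rho) = 0$ is a nontrivial closed condition (it is not implied by the degree-$\le 1$ relations, by minimality of $\rho$), so $\rep(A,\bd) \cap \cS(\cO_\bd)$ is a proper closed subset of $\cS(\cO_\bd)$; meanwhile there exist $A$-modules of dimension vector $\bd$ not lying in $Z_\bd$ (e.g. taking some $e_iM$ to be $A_i$-free but with the arrow maps between consecutive vertices chosen so that $M(p) = 0$ is automatic, living in a different stratum or in the complement of $Z_\bd$), giving at least two irreducible components.

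\textbf{Main obstacle.} The delicate point — and where I expect the real work to lie — is Step 3 and the bookkeeping of \emph{which} locally free $A$-modules of rank vector $(1,\ldots,1)$ actually exist: one must show that the locus $\{M \in \rep(A,\bd) : \rk(e_iM) = 1\ \forall i\}$ is \emph{not} all of the generic stratum, i.e. that imposing $M(\rho)=0$ genuinely cuts down the fiber. This requires knowing that the degree-one relations in $\cI'$ do not already force every $A'$-module of this rank vector to kill $p$; equivalently, that $e_a A' e_b \otimes (\text{stuff})$ is large enough to admit a representative with $M(p)\ne 0$. The tensor-algebra presentation $A' \simeq T_S(B)$ and the rank formula of Proposition~\ref{prop ranks} are exactly the tools to verify this: since $r_i \ge 1$ and the path $p$ corresponds to a nonzero element of $e_a A' e_b$ that is not in (the image in $A'$ of) $\cI$, one gets a projective $A'$-module $A'e_b$ on which left multiplication by $p$ is nonzero. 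I would also need to be slightly careful that the second irreducible component I produce (the one where $M(p)=0$ holds "for free") is genuinely not contained in $Z_\bd$ — this should follow from an upper-semicontinuity / dimension count comparing the fiber dimension in Lemma~\ref{lem:strat} against the codimension of the closed condition $M(\rho)=0$, using that all fibers of $\pi$ over the open orbit have the same dimension while the closure $Z_\bd$ is irreducible of the "full" dimension and hence cannot also contain the lower-dimensional $M(p)=0$ family as an open dense part. Modulo these dimension estimates the contradiction with geometric irreducibility is immediate.
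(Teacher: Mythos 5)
Your plan diverges substantially from the paper's argument, and it has a genuine gap at its core: you never actually produce two irreducible components of $\rep(A,\bd)$. Steps 3 and 4 establish (at best) that the degree-$j$ relation $\rho$ is a nontrivial closed condition on the generic stratum, i.e.\ that $\rep(A,\bd)$ is a \emph{proper closed subset} of $\rep(A',\bd)$ --- but that says nothing about whether $\rep(A,\bd)$ itself is irreducible. Your proposed ``second family'' is not exhibited: a module with every $e_iM$ free of rank one and $M(p)=0$ lies over the very same orbit $\cO_\bd$, hence in the closure of the very same stratum you are comparing it against, so it does not give a separate component; and if you instead move to a degenerate stratum, you would need to show its closure is not swallowed by the big one, which is exactly the unproved ``dimension estimate'' you defer. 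Moreover, with the rank vector fixed at $(1,\ldots,1)$ there is no free parameter left to force any dimension inequality, so there is no reason the deferred estimate should hold. A further (minor) issue: the stratification of Section~\ref{subsec:strat} and Lemma~\ref{lem:strat} are set up under the hypothesis that $\cI$ is generated by relations of degree at most one, so the constancy of fiber dimensions you invoke is not available once $M(\rho)=0$ is imposed.

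The paper's proof avoids all of this by a quite different mechanism. It reduces (by induction on $n$) to $R_n \ne \varnothing$ and compares two explicit modules of equal rank vector: the projective $P = Ae_n \oplus Ae_0^{k+l}$, whose orbit closure $\cX$ is automatically an irreducible component because $P$ is rigid, and $Q = (Ae_{n-1})^{r_n} \oplus (Ae_0)^k \oplus D(e_nA)$. The relation in $R_n$ enters only through Proposition~\ref{prop ranks}: it creates a deficiency $s\ge 1$ in the first entry of $\rk(Ae_n)$, which forces $l=s\ge 1$ in order to match rank vectors. The endomorphism-ring computation then gives $\dim\cY - \dim\cX = ld_0 - r_nd_{n-1}$, and the crucial point is that $k$ is a free parameter: letting $k\to\infty$ makes $d_0$ large while $r_n$ and $d_{n-1}$ stay fixed, so $\dim\cY > \dim\cX$ and $\cY \not\subseteq \cX$, contradicting irreducibility. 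That adjustable multiplicity is the engine of the proof, and it is precisely what your fixed choice $\bd = (c_0,\ldots,c_n)$ forfeits. To repair your approach you would have to either carry out a genuine component analysis of the fibers of $\pi$ after imposing $M(\rho)=0$ (which is delicate and not obviously uniform in $j$), or reintroduce a scaling parameter along the lines of the paper.
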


\begin{proof}
By induction we may assume that $R_2 = \cdots = R_{n - 1} = \varnothing$, and to get a contradiction we assume that
$R_n \neq \varnothing$.

For a certain dimension vector $\bd$ we
construct two irreducible subsets $\cX$ and $\cY$ of $\rep (A,\bd)$ such that $\cX$ is an irreducible component of $\rep(A,\bd)$ and
$\dim \cY > \dim \cX$.

For $k,l \ge 0$ let
$$
P := Ae_n \oplus Ae_0^{k+l}
\text{\;\;\; and \;\;\;}
Q :=  (A e_{n - 1})^{r_n} \oplus (Ae_0)^k \oplus D(e_nA).
$$

Set $\bd = (d_0,\ldots,d_n) := \dimv(P)$, and let
$\cX$ be the closure of the orbit of $P$.
Since $P$ is projective, the orbit of $P$ is open.
Thus $\cX$ is an irreducible component of $\rep(A,\bd)$.

We have $\dim \cX = \dim G_{\bd} - \dim \End_A (P)$.
Furthermore, we easily see that $\dim \End_A (P) = d_n + (k + l)d_0$, hence
\[
\dim \cX = \dim G_{\bd} - d_n - kd_0 - ld_0.
\]
We have
$$
\rk(Ae_n) = (r_1 \cdots r_n-s,r_2 \cdots r_n,\ldots,r_n,1)
\text{\;\;\; and \;\;\;}
\rk(Ae_0) = (1,0,\ldots,0)
$$
where $s \ge 1$.
Here we used Proposition~\ref{prop ranks} and our assumptions on the sets $R_i$.

Thus we get
$$
\rk(P) = (r_1 \cdots r_n-s,r_2 \cdots r_n,\ldots,r_n,1) +
(k+l,0,\ldots,0).
$$

For $Q$ we get
$$
\rk(Q) = (r_1 \ldots r_n,r_2 \ldots,r_n,\ldots,r_{n-1}r_n,r_n,0)
+ (k,0,\ldots,0) + (0,\ldots,0,1).
$$
Here we used again Proposition~\ref{prop ranks}.

We get $\rk(P) = \rk(Q)$ if and only if
$-s+(k+l) = k$ if and only if $l = s$.
Thus, from now on let $l := s$.

Let $\cY$ be the closure of the orbit of $Q$.
Note that $\Hom (D(e_nA),(Ae_{n - 1})^{r_n} \oplus (A e_0)^k) = 0$,
since the modules $D(e_nA)$ and $(Ae_{n - 1})^{r_n} \oplus (A e_0)^k$
have disjoint supports.
Consequently, we get
$\dim \End_A(Q) = r_nd_{n-1} + kd_0 + d_n$, hence
\[
\dim \cY = \dim G_{\bd} - d_n - kd_0 - r_nd_{n-1}.
\]

Note that neither $r_n$, nor $l$ nor $d_{n - 1}$ depend on $k$.
Moreover, when $k$ increases, $d_0$ increases.
So we may choose $k$ such that $ld_0 > r_nd_{n - 1}$.
This finishes the proof.
\end{proof}

Let $Q^\circ = Q^\circ(n,t_1,\ldots,t_n)$
be the quiver obtained from $Q$ by deleting all loops.
Let
$$
A^\circ := KQ^\circ/\cI^\circ
$$
where $\cI^\circ$ is generated by all relations $\rho \in \cI$ such that
none of the loops occurs in $\rho$.

\begin{Cor}\label{cor:nobigrelations}
If $A^\circ$ is geometrically irreducible, then $R_2 = \cdots = R_n = \varnothing$.
\end{Cor}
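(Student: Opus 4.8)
The plan is to deduce the corollary from Proposition~\ref{prop:nobigrelations} by applying that result to $A^\circ$ rather than to $A$. The point is that $A^\circ = KQ^\circ/\cI^\circ$ is again the algebra of a linear quiver --- namely $Q^\circ = Q^\circ(n,t_1,\ldots,t_n)$, which is $Q$ with its loops removed --- so that the whole framework of Section~\ref{sec:linear1} applies to $A^\circ$ verbatim, with the grading $\deg(\alpha_{ij}) = 1$ and with $e_iA^\circ e_i = K$ for every vertex $i$. Writing $R^\circ = R_1^\circ \cup \cdots \cup R_n^\circ$ for the degree decomposition of a minimal set of relations generating $\cI^\circ$, the argument splits into two parts: first, that $R_2 = \cdots = R_n = \varnothing$ is equivalent to $R_2^\circ = \cdots = R_n^\circ = \varnothing$; and second, that the latter follows from geometric irreducibility of $A^\circ$.

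The second part is obtained by transcribing the proof of Proposition~\ref{prop:nobigrelations} with $A$ replaced by $A^\circ$. Assuming inductively that $R_2^\circ = \cdots = R_{n-1}^\circ = \varnothing$ and, towards a contradiction, that $R_n^\circ \neq \varnothing$, the degree-$n$ relation again produces a strictly positive defect $s$ in the first entry of $\rk(A^\circ e_n)$ via Proposition~\ref{prop ranks} applied to $A^\circ$; then the modules $P = A^\circ e_n \oplus (A^\circ e_0)^{k+l}$ and $Q = (A^\circ e_{n-1})^{r_n} \oplus (A^\circ e_0)^k \oplus D(e_n A^\circ)$ with $l := s$ have equal dimension vector $\bd$ and satisfy $\dim \cY > \dim \cX$ for $\cX := \ov{\cO_P}$ and $\cY := \ov{\cO_Q}$ in $\rep(A^\circ,\bd)$ once $k$ is chosen large enough. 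Since $P$ is projective, $\cX$ is an irreducible component of $\rep(A^\circ,\bd)$, so $\cY$ forces $\rep(A^\circ,\bd)$ to be reducible, contradicting geometric irreducibility of $A^\circ$. Nothing in this step uses the presence of loops, so it goes through unchanged, and induction then yields $R_i^\circ = \varnothing$ for all $i \ge 2$.

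The first part is where I expect the real difficulty to lie. One has to compare minimal generating sets of $\cI$ and of $\cI^\circ$ and show that deleting the loops neither hides nor introduces a relation of degree at least two. That a loop-free degree-$\ge 2$ relation occurring in a minimal generating set of $\cI$ stays independent in $\cI^\circ$ is the easy direction; the delicate point is the converse, that under the running hypotheses a degree-$\ge 2$ relation of $\cI$ may always be taken loop-free. This is exactly where the structural results of the section must be invoked: by Lemma~\ref{lem:rigidlocfree1} all projective and all injective $A$-modules are locally free, hence each bimodule $B_i = e_{i-1}Ae_i$ is free on both sides over the adjacent truncated polynomial rings, which by Proposition~\ref{prop isomorphism} identifies the degree-$\le 1$ quotient of $A$ with the tensor algebra $T_S(B)$ and so controls the graded pieces of $\cI$. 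Carrying this out --- say by fixing bases of the $B_i$ over the $A_{i-1}$ and analysing the kernel of $T_S(B) \to A$ in each degree --- is the part that needs care; the rest is formal.
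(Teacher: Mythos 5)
Your proposal has a genuine gap, and it is exactly the step you yourself flag as ``where the real difficulty lies'' and then leave unproved: the claimed equivalence between $R_2=\cdots=R_n=\varnothing$ (for a minimal generating set of $\cI$) and $R_2^\circ=\cdots=R_n^\circ=\varnothing$ (for $\cI^\circ$). This step is not only missing, it cannot be supplied, because under the corollary's hypothesis it is false. The hypothesis concerns only $A^\circ$, and a minimal degree-$\ge 2$ relation of $\cI$ may involve loops and then leave no trace in $\cI^\circ$: for $Q=Q(2,1,1)$ with $\cI=(\vep_0^2,\vep_1^2,\vep_2^2,\vep_0\alpha_{11}\alpha_{12})$ one has $R_2\neq\varnothing$ while $\cI^\circ=0$, so $A^\circ$ is hereditary and hence geometrically irreducible. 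Moreover, the tools you propose for this step (local freeness of projectives via Lemma~\ref{lem:rigidlocfree1} and the tensor-algebra description of Proposition~\ref{prop isomorphism}) are applied to $A$, and thus require $A$ itself to be geometrically irreducible, which is not among the corollary's hypotheses. The resolution is that the conclusion must be read as a statement about the degree decomposition of the relations of $\cI^\circ$ (equivalently, of the loop-augmented algebra below), i.e.\ that a loop-free geometrically irreducible linear-quiver algebra has no relations at all; that is precisely the form in which the corollary is invoked in the proof of Theorem~\ref{thm:2a}. With that reading your ``first part'' disappears entirely.

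Your ``second part'' is correct and, once the first part is dropped, it does prove the statement; but it takes a different (and longer) route than the paper. You re-run the dimension count of Proposition~\ref{prop:nobigrelations} for $A^\circ$ itself, using that with $e_iA^\circ e_i=K$ every module is locally free, that a minimal degree-$n$ relation still forces $s\ge 1$ in Proposition~\ref{prop ranks}, and that the comparison of $\dim\cX$ and $\dim\cY$ is insensitive to the loops; all of that checks out. The paper instead reduces to the Proposition as stated: it forms $B$ from $A^\circ$ by adding at each vertex a loop $\vep_i$ with relation $\vep_i^{c_i}$, observes that $\rep(B,\bd)\cong\rep(A^\circ,\bd)\times\prod_i\rep\bigl(K[\vep_i]/(\vep_i^{c_i}),d_i\bigr)$ is a product of irreducible varieties (each second factor being the closure of a single nilpotent orbit), so that $B$ is still geometrically irreducible, and then quotes Proposition~\ref{prop:nobigrelations} for $B$ verbatim. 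Your transcription buys nothing over this one-line reduction, but it is not wrong.
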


\begin{proof}
Suppose that $A^\circ$ is geometrically irreducible.
Let $B$ be obtained from $A^\circ$ by adding a loop $\vep_i$ to each vertex $i$, and
by adding relations $\vep_i^{c_i}$ for some $c_i \ge 2$.
Obviously, $B$ is still geometrically irreducible.
Now the statement follows from Proposition~\ref{prop:nobigrelations}.
\end{proof}


\section{Quivers without loops}\label{sec:noloops}


\begin{Thm}\label{thm:2a}
Let $A = KQ/\cI$.
Assume that $Q$ does not contain any loop.
Then the following are equivalent:
\begin{itemize}

\item[(i)]
$A$ is geometrically irreducible;

\item[(ii)]
$A$ is hereditary.

\end{itemize}
\end{Thm}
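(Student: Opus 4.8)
The implication (ii)$\Rightarrow$(i) is already known: by Bongartz's Theorem~\ref{thm:intro1}, a hereditary algebra has $\md(A,d)$ smooth for all $d$, and $\rep(A,\bd)$ is known to be irreducible for hereditary $A$ (equivalently one invokes the fact that hereditary algebras are geometrically irreducible, as remarked after Theorem~\ref{thm:intro1}). So the whole content is the implication (i)$\Rightarrow$(ii). The plan is as follows. Assume $A = KQ/\cI$ is geometrically irreducible and $Q$ has no loops. First I would reduce to the minimal case using the glueing procedure of Section~\ref{sec:glue}: if $A$ is not minimal, then $\rep(A,\bd) \cong \rep(A',\bd') \times \rep(A'',\bd'')$, both factors are geometrically irreducible, both $Q'$ and $Q''$ are loopless, and hereditarity of the pieces implies hereditarity of $A$ (a quiver algebra is hereditary iff each of its connected glueing pieces is, since the relations are distributed among them and the glueing only identifies vertices, not arrows). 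So assume $A$ minimal. If $\cI = 0$ we are done, so suppose $\cI \neq 0$, and I want a contradiction.

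The key reductions come from Section~3 and Section~6. By Lemma~\ref{lem:reduction3}, since $Q$ has no loops there are no oriented cycles in $Q$ at all. Next, I would pick a minimal generating set of relations $R$ for $\cI$ and look at an arrow $a$ occurring in some relation; since $Q$ has no cycles, $a$ lies on a directed path, and by localizing at the subset $E = Q_0 \setminus \supp(\text{relevant path})$ one can bring any given relation into the ``linear quiver'' situation of Section~\ref{sec:linear1}. More precisely, given a relation $\rho \in R$, its support $Q_\rho$ is a connected subquiver sitting inside a directed path of $Q$ (no cycles!), so $A_E = A/Ae_EA$ for a suitable $E$ is geometrically irreducible (by the observation at the start of Section~3) and its quiver is of the form $Q^\circ(n,t_1,\dots,t_n)$ — a linear quiver with no loops. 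Now Corollary~\ref{cor:nobigrelations} applies: since $A^\circ$ (which here equals this loopless linear-quiver algebra) is geometrically irreducible, all relations have degree at most one, i.e. $\cI$ is generated by relations of degree one with respect to the grading in which every arrow has degree one. But a degree-one relation is a relation among paths of length one, i.e. among arrows — and since the $p_i$ occurring in a relation must be paths of length at least two by definition of a relation, $R_1 = \varnothing$ too. Hence $\cI = 0$ on this localized piece, contradicting the assumption that $\cI \neq 0$ was witnessed (after localization) by this $\rho$.

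The step I expect to be the main obstacle is the bookkeeping in the reduction to the linear quiver $Q^\circ(n,t_1,\dots,t_n)$: I need to check that after deleting vertices to isolate a single relation's support, the remaining quiver really has the shape required by Section~\ref{sec:linear1} (vertices linearly ordered, arrows only between consecutive vertices, multiplicities $t_i$ allowed, and crucially \emph{no loops}), and that Corollary~\ref{cor:nobigrelations} is genuinely applicable — its hypothesis is on $A^\circ$, the loop-free version, which in our setting is just $A$ itself restricted to that piece, so the application should be clean once the shape is verified. One subtlety: after localizing, the quiver might not be ``linear'' if $\rho$'s support branches; but since $Q$ has no oriented cycles and the $p_i$ in $\rho$ all share source and target, $Q_\rho$ is forced into a very constrained shape, and a further localization (or direct inspection) reduces to the case covered by Corollary~\ref{cor:nobigrelations}. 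Once the contradiction is reached, $\cI = 0$, so $A = KQ$ is hereditary, completing the proof.
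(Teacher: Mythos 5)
Your overall strategy coincides with the paper's: establish (ii)$\Rightarrow$(i) via Bongartz, then for (i)$\Rightarrow$(ii) use Lemma~\ref{lem:reduction3} to exclude oriented cycles, localize a putative relation onto a linear quiver $Q^\circ(m,t_1,\ldots,t_m)$, and derive a contradiction from Corollary~\ref{cor:nobigrelations}. (The preliminary reduction to minimal algebras is harmless but unnecessary; the paper skips it.) However, the step you yourself flag as the main obstacle is a genuine gap, and the specific claim you lean on is false: the support $Q_\rho$ of a relation $\rho=\sum_i\lambda_ip_i$ is the \emph{union} of the paths $Q_{p_i}$, and these paths need not share intermediate vertices, so $Q_\rho$ does not in general sit inside a directed path -- a commutativity relation on a square $1\to 2\to 4$, $1\to 3\to 4$ already breaks this. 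Saying that ``a further localization or direct inspection'' handles the branching does not resolve two concrete problems: (a) after deleting vertices the surviving quiver may still contain ``shortcut'' arrows between non-consecutive vertices of the chosen path, so it need not have the shape $Q^\circ(m,t_1,\ldots,t_m)$ required by Section~\ref{sec:linear1}; (b) the restriction of $\cI$ to the subquiver could become zero (every path of every relation might pass through a deleted vertex), in which case no contradiction is obtained.

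The paper closes this gap with one precise choice that your proposal is missing: take $m$ to be the \emph{minimal} length of a path occurring in any relation of $\cI$, let $p=(a_1,\ldots,a_m)$ realize this minimum, and let $Q'$ be the full subquiver on $\supp(p)$. Since $Q$ has no oriented cycles, the $m+1$ vertices of $p$ are distinct and totally ordered by the existence of paths. Any path $p_i$ of a relation that survives the restriction to $Q'$ has length $\ge m$ by minimality and length $\le m$ because it visits distinct vertices of $Q'$; hence it has length exactly $m$, traverses all $m+1$ vertices in the forced order, and so every arrow occurring in a restricted relation is parallel to some $a_j$. This simultaneously guarantees that the relevant subquiver $Q''$ is exactly $Q^\circ(m,t_1,\ldots,t_m)$ and that the restricted ideal is nonzero (the term $\lambda p$ of the relation containing $p$ survives, and distinct paths are linearly independent in $KQ'$). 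Without this minimality argument the reduction to Corollary~\ref{cor:nobigrelations} does not go through, so you should supply it before the proof can be considered complete.
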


\begin{proof}
As a consequence of Theorem~\ref{thm:intro1} we know that
(ii) implies (i).

To prove the converse, assume that $A = KQ/\cI$ is geometrically
irreducible.
We assume that $Q$ does not have loops.
Now Lemma~\ref{lem:reduction3} implies that $Q$ does not have any
oriented cycles.
Assume that $\cI \not= 0$.

Let $m$ be the minimum over the lengths of all paths occuring in
some relation in $\cI$, and let
$p = (a_1,\ldots,a_m)$ be such a path with $\length(p) = m$.
It follows that $m \ge 2$.

Let $Q'$ be the full subquiver of $Q$ with vertices the support
$\supp(p)$ of $p$.
For each relation
$$
\rho= \sum_{i=1}^t \lambda_ip_i
$$
in $\cI$ with $\lambda_i \not= 0$ for all $i$,
let
$$
\rho' := \sum_{i \in I_\rho} \lambda_ip_i,
$$
where $I_\rho := \{ 1 \le i \le t \mid p_i \text{ belong to } Q' \}$.
Now let $\cI'$ be generated by all $\rho'$, where $\rho$ runs through all
relations in $\cI$.

The varieties $\rep(A,\bd)$, where $\bd = (d_1,\ldots,d_n)$ is a
dimension vector with $d_i = 0$ for all $i \notin \supp(p)$,
are now isomorphic to the varieties $\rep(A',\bd')$ of
$A' = KQ'/\cI'$.
(Here $\bd'$ is obtained from $\bd$ be deleting the entries $d_i$ with
$i \in \supp(p)$.)

By minimality, the arrows $a$ occuring in the relations in $\cI'$ satisfy
$s(a) = s(a_i)$ and $t(a) = t(a_i)$ for some $i$.
Taking these arrows we obtain a geometrically irreducible
algebra $A'' = KQ''/\cI''$ with $Q'' \cong Q^\circ(m,t_1,\ldots,t_m)$
with $t_i \ge 1$ and $1 \le i \le m$, and by construction we
have $\cI'' \not= 0$.

Since there are no loops, all relations in $\cI''$ have degree at least two,
a contradiction to Corollary~\ref{cor:nobigrelations}.
This shows that $\cI = 0$, and therefore $A$ is hereditary.
\end{proof}

\begin{Cor}\label{cor:2b}
Let $A = KQ/\cI$.
Assume that $\gldim(A) < \infty$.
Then the following are equivalent:
\begin{itemize}

\item[(i)]
$A$ is geometrically irreducible;

\item[(ii)]
$A$ is hereditary.

\end{itemize}
\end{Cor}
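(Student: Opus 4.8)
The plan is to deduce Corollary~\ref{cor:2b} from Theorem~\ref{thm:2a} using the No-Loop Theorem, exactly as announced in the introduction. The direction (ii)$\Rightarrow$(i) is already covered by Theorem~\ref{thm:intro1}, so only (i)$\Rightarrow$(ii) requires argument. Assume $A = KQ/\cI$ is geometrically irreducible with $\gldim(A) < \infty$. The key input is the No-Loop Theorem (cf. \cite{I}, \cite{L}), which states that a finite-dimensional algebra of finite global dimension has no loops; equivalently, $\Ext_A^1(S,S) = 0$ for every simple module $S$, which translates into $Q$ having no loops.

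First I would invoke the No-Loop Theorem to conclude that $Q$ contains no loop. Then Theorem~\ref{thm:2a} applies directly: since $A = KQ/\cI$ is geometrically irreducible and $Q$ has no loops, $A$ is hereditary. This gives (i)$\Rightarrow$(ii), and combined with the easy converse completes the proof.

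There is essentially no obstacle here — the only point that needs a sentence of care is the reduction to the case $A = KQ/\cI$ with $\cI$ admissible, but this is the standing assumption throughout the paper (and is harmless by Bongartz's Morita-invariance results, as noted in the introduction), so one may simply write $A$ in this form from the outset. The proof is therefore:

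\begin{proof}
By Theorem~\ref{thm:intro1}, (ii) implies (i).
Conversely, assume that $A = KQ/\cI$ is geometrically irreducible and that $\gldim(A) < \infty$.
By the No-Loop Theorem (cf. \cite{I} and \cite{L}), the quiver $Q$ does not contain any loop.
Hence Theorem~\ref{thm:2a} applies and shows that $A$ is hereditary.
\end{proof}
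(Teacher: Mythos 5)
Your proof is correct and follows exactly the paper's own argument: the No-Loop Theorem yields that $Q$ has no loops, and then Theorem~\ref{thm:2a} gives heredity, with the converse supplied by Theorem~\ref{thm:intro1}. Nothing further is needed.
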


\begin{proof}
Since by assumption $\gldim(A) < \infty$,
the No-Loop Theorem (cf. \cite{I} and \cite{L}) implies that the quiver $Q$
does not have any loops.
Now the statement follows from Theorem~\ref{thm:2a}.
\end{proof}

\bigskip
{\parindent0cm \bf Acknowledgements.}\,
The first author acknowledges the support of the National Science Center grant no.\ 2015/17/B/ST1/01731.
He would also like to thank the University of Bonn for its hospitality during his two one week visits in July 2016 and July 2017. 
The second author thanks the Nicolaus Copernicus University in Toru\'n
for one week of hospitality in October 2015, where this work was initiated.
He thanks for a second week in Toru\'n in March 2017, and he is also
grateful to the SFB/Transregio TR 45 for financial support.



\begin{thebibliography}{999}


\bibitem[AGV]{AGV}
A. Ardizzoni, F. Galluzzi, F. Vaccarino,
\emph{A new family of algebras whose representation schemes are smooth}.
Ann. Inst. Fourier (Grenoble) 66 (2016), no. 3, 1261--1277.

\bibitem[AR]{AR}
M. Auslander, I. Reiten,
\emph{Applications to contravariantly finite subcategories}.
Adv. Math. 86 (1991), no. 1, 111--152.

\bibitem[BS1]{BS1}
G. Bobi\'nski, J. Schr\"oer,
\emph{Algebras with irreducible module varieties II: Two vertex case}.
Preprint (2017), In preparation.

\bibitem[BS2]{BS2}
G. Bobi\'nski, J. Schr\"oer,
\emph{Algebras with irreducible module varieties III:
Birkhoff varieties}.
Preprint (2017), In preparation.

\bibitem[Bo]{Bo}
K. Bongartz,
\emph{A geometric version of the Morita equivalence}.
J. Algebra 139 (1991), no. 1, 159--171.

\bibitem[Ga]{Ga}
P. Gabriel,
\emph{Finite representation type is open}.
Proceedings of the International Conference on Representations of Algebras (Carleton Univ., Ottawa, Ont., 1974), Paper No. 10, 23 pp. Carleton Math. Lecture Notes, No. 9, Carleton Univ., Ottawa, Ont., 1974.

\bibitem[GLS]{GLS}
C. Gei{\ss}, B. Leclerc, J. Schr\"oer,
\emph{Quivers with relations for symmetrizable Cartan matrices I: Foundations}.
Invent. Math. 209 (2017), no. 1, 61--158.

\bibitem[Gu]{Gu}
R. Guralnick,
\emph{A note on commuting pairs of matrices}.
Linear and Multilinear Algebra 31 (1992), no. 1-4, 71--75.

\bibitem[I]{I}
K. Igusa,
\emph{Notes on the no loops conjecture}.
J. Pure Appl. Algebra 69 (1990), no. 2, 161--176.

\bibitem[Iw]{Iw}
Y. Iwanaga,
\emph{On rings with finite self-injective dimension}.
Commun. Algebra 7 (1979), no. 4, 393--414.

\bibitem[L]{L}
H. Lenzing,
\emph{Nilpotente Elemente in Ringen von endlicher globaler Dimension}. (German) Math. Z. 108 (1969), 313--324.

\bibitem[MT]{MT}
T. Motzkin, O. Taussky,
\emph{Pairs of matrices with property L. II}.
Trans. Amer. Math. Soc. 80 (1955), 387--401.

\end{thebibliography}
\end{document}